\definecolor{linkred}{RGB}{199,21,133}
\definecolor{linkblue}{RGB}{16, 78, 139}
	\titlespacing{\section}{0pt}{12pt}{0pt}
	\titlespacing{\subsection}{0pt}{6pt}{0pt}
\long\def\@footnotetext#1{%
\H@@footnotetext{%
\ifHy@nesting 
\hyper@@anchor{\@currentHref}{#1}%
\else 
\Hy@raisedlink{\hyper@@anchor{\@currentHref}{\relax}}#1%
\fi 
}}
\def\@footnotemark{%
\leavevmode 
\ifhmode\edef\@x@sf{\the\spacefactor}\nobreak\fi 
\H@refstepcounter{Hfootnote}%
\hyper@makecurrent{Hfootnote}%
\hyper@linkstart{link}{\@currentHref}%
\@makefnmark 
\hyper@linkend 
\ifhmode\spacefactor\@x@sf\fi 
\relax 
}%
\renewcommand*\@footnotemark{%
\leavevmode 
\ifhmode 
\edef\@x@sf{\the\spacefactor}%
\FN@mf@check 
\nobreak 
\fi 
\H@refstepcounter{Hfootnote}%
\hyper@makecurrent{Hfootnote}%
\hyper@linkstart{link}{\@currentHref}%
\@makefnmark 
\hyper@linkend 
\ifFN@pp@towrite 
\FN@pp@writetemp 
\FN@pp@towritefalse 
\fi 
\FN@mf@prepare 
\ifhmode\spacefactor\@x@sf\fi 
\relax%
}%
\newtheorem{thm}{Theorem}[section]
\newtheorem{coro}[thm]{Corollary}
\newtheorem{lem}[thm]{Lemma}
\theoremstyle{definition}
\theoremstyle{remark}
\newtheorem{remk}[thm]{Remark}
\renewcommand{\phi}{\varphi}
\newcommand{\tmu}{\widetilde{\mu}}
\newcommand{\omu}{\overline{\mu}}
\newcommand{\Fcal}{ {\mathcal F}}
\newcommand{\Acall}{ {\mathcal A}}
\newcommand{\be}{ \begin{equation} }
\newcommand{\ee}{ \end{equation} }
\newcommand{\hra}{\hookrightarrow}
\newcommand{\ra}{\rightarrow}
\newcommand{\lra}{\longrightarrow}
\newcommand{\inv}{^{-1}}
\newcommand{\Aut}{\mathrm{Aut}}
\newcommand{\Mod}{\mathrm{Mod}}
\newcommand{\St}{\mathrm{St}}
\newcommand{\arcgraph}[2]{\mathcal{A}^{[#1]}(#2)}
\newcommand{\arcgraphstratum}[3]{\mathcal{A}^{[#1]}_{#2}(#3)}
\newcommand{\B}[2]{\mathcal{B}^{[#1]}(#2)}
\long\def\symbolfootnote[#1]#2{\begingroup%
\def\thefootnote{\fnsymbol{footnote}}\footnote[#1]{#2}\endgroup}
\def\blfootnote{\xdef\@thefnmark{}\@footnotetext}
\date{\today}
\begin{document}

{\Large \bfseries 
Geometric simplicial embeddings of arc-type graphs

}

{\large 
Hugo Parlier \symbolfootnote[1]{
Research partially supported by ANR/FNR project SoS, INTER/ANR/16/11554412/SoS, ANR-17-CE40-0033.\\
\vspace{.1cm} \\
{\em 2010 Mathematics Subject Classification:} Primary: 57M15. Secondary: 05C60. \\
{\em Key words and phrases:} Arc graphs, flip graphs, mapping class groups.}
and Ashley Weber}

\vspace{0.5cm}

{\bf Abstract.} 
In this paper, we investigate a family of graphs associated to collections of arcs on surfaces. These {\it multiarc graphs} naturally interpolate between arc graphs and flip graphs, both well studied objects in low dimensional geometry and topology. We show a number of rigidity results, namely showing that, under certain complexity conditions, that simplicial maps between them only arise in the ``obvious way". We also observe that, again under necessary complexity conditions, subsurface strata are convex. Put together, these results imply that certain simplicial maps always give rise to convex images. 

\vspace{1cm}
\section{Introduction}

Both arc graphs and flip graphs are useful objects for studying surfaces and their mapping class groups. The former is a Gromov hyperbolic graph on which the mapping class group acts, while the latter is, for large enough complexity, not Gromov hyperbolic but is always quasi-isometric to the underlying mapping class group. Arc graphs have single arcs as vertices while flip graphs have maximal sets of disjoint arcs (triangulations) as vertices. Thus interpolating in between them gives rise to a family of graphs where vertices are multiarcs (sets of arcs of a given size), and which we study in this article. We're interested in rigidity phenomena and to what extent results about the geometry of flip graphs extend to these graphs.

Before describing our results, let us point out that the setup is quite similar in nature to multi-curve graphs which interpolate between curve graphs and pants graphs. Erlandsson and Fanoni \cite{Erlandsson-Fanoni} studied their rigidity from the point of view of understanding simplicial maps between them. These generalized results of Aramayona who proved these results for pants graphs \cite{Aramayona}. For context, we note that mapping class groups also enjoy forms of strong rigidity, see for example the results of Aramayona and Souto \cite{Aramayona-Souto}. These results inspired us to study similar phenomena between multiarc graphs, as generalizations of similar results for flip graphs \cite{AKP}.

Another source of inspiration comes from questions about the intricate geometry of these combinatorial objects. For both pants graphs and flip graphs, the following question makes sense: if two pants decompositions or triangulations in a given graph have a curve or arc in common, does any geodesic between them always retain the common curve or arc? A geodesic between, say, two pants decomposition describes how to transform one into the other in the smallest possible number of moves, so it seems natural to never move a curve already in place. And in fact, for pants decompositions, this is at least {\it quasi} true, by results of Brock \cite{Brock} and Wolpert \cite{Wolpert}, by which we mean that you can find quasi-geodesics between the pants decompositions which do exactly that. However, despite partial results, it is not known in general \cite{APS1, APS2, ALPS, Taylor-Zupan} and is in fact equivalent to asking whether there are always a finite number of geodesics between vertices of the pants graph. For flip graphs, this is {\it always} known to be the case by recent results of Disarlo with the first author \cite{Disarlo-Parlier}. 

Our first results show that this continues to hold for multiarc graphs. For a surface $S$ and an integer $k\geq 1$, we denote by $\Acall^{[k]}(S)$ the $k$-multiarc graph (see Section \ref{sec:prelim} for the precise definition of vertices and edges). The subset of $\Acall^{[k]}(S)$ consisting of multiarcs which contain a given multiarc $\nu$ is denoted by $\arcgraphstratum{k}{\nu}{S}$. 

\begin{thm}\label{convexity main}
Let $k' \leq k$. For any $k'$-multiarc $\nu$, $\arcgraphstratum{k}{\nu}{S}$ is strongly convex.
\end{thm}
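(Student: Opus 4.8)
The plan is to show that any geodesic in $\Acall^{[k]}(S)$ joining two vertices of $\arcgraphstratum{k}{\nu}{S}$ stays inside that stratum, i.e.\ every multiarc along the geodesic contains $\nu$. First I would record what an edge of the $k$-multiarc graph is: two $k$-multiarcs $\mu$, $\mu'$ are adjacent when they share $k-1$ arcs and the two remaining arcs $a \in \mu$, $a' \in \mu'$ are ``as disjoint as possible'' (realizing the minimal intersection among arcs completing the common $(k-1)$-multiarc, so in particular disjoint when that is possible). The key structural observation is a \emph{projection} (or retraction) map: given the fixed $k'$-multiarc $\nu$, and given any $k$-multiarc $\mu$, one wants to produce a $k$-multiarc $r(\mu) \in \arcgraphstratum{k}{\nu}{S}$ that agrees with $\mu$ on $\nu \cap \mu$ and replaces the arcs of $\mu$ that meet $\nu$ by arcs of $\nu$, in a way that does not increase distances. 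If $r$ is $1$-Lipschitz and fixes $\arcgraphstratum{k}{\nu}{S}$ pointwise, then for any geodesic $\gamma$ between two points of the stratum, $r(\gamma)$ is a path of the same length inside the stratum with the same endpoints, hence also a geodesic; combined with an argument that $r$ strictly shortens any edge leaving the stratum, this forces $\gamma$ itself to lie in the stratum, giving strong convexity.

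The cleanest way to set this up is by induction on $k - k'$, reducing to the case $k' = k-1$, i.e.\ adding one arc at a time: it suffices to show that for a single arc $a$ that is disjoint from (or at least compatible with) everything in sight, the subset of $\Acall^{[k]}(S)$ of multiarcs containing $a$ is strongly convex, and then iterate. For that one-arc case I would cut the surface along $a$ (or pass to the subsurface filled by the complement), identify the relevant sub-multiarc graph with a multiarc graph of the cut surface $S \setminus a$ shifted in index, and argue that a geodesic in $\Acall^{[k]}(S)$ between two multiarcs each containing $a$ can be pushed to the subgraph of multiarcs containing $a$ without lengthening. Concretely, if $\mu_0, \mu_1, \dots, \mu_n$ is a geodesic with $a \in \mu_0 \cap \mu_n$, then along the geodesic one tracks the arc $a$: at each step either $a$ is retained, or the flip-type move discards $a$; in the latter case one shows the move can be replaced by one (or zero) moves that keep $a$, using that $a$ was already disjoint from the ``new'' arc being introduced (or can be isotoped to be), so introducing that arc does not require first removing $a$.

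The main obstacle is precisely this local surgery on geodesics: showing that a flip-type move that removes the distinguished arc $a$ can always be traded for a move (or a pair of moves of no greater total length) that retains $a$, \emph{without} creating a detour. This is where one must use the minimality condition in the definition of edges of $\Acall^{[k]}(S)$ — the fact that the replacement arc is chosen to have minimal intersection with the fixed complementary part — to guarantee that the new arc is already compatible with $a$, so that $a$ need never be sacrificed. One must also be careful that the projected path is still an honest edge-path (consecutive images are equal or adjacent, never ``jumping'' two moves at once), and that the index bookkeeping ($k$ versus $k-1$ versus $k'$) is consistent when $\nu$ has several arcs; the induction handles the latter, but the base step carries all the geometric content. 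I would expect to model the argument closely on the flip-graph convexity results of Disarlo and the first author \cite{Disarlo-Parlier}, adapting their ``distance non-increasing retraction onto a subsurface'' technique to the multiarc setting.
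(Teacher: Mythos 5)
Your overall strategy --- reduce to a single common arc and then push a geodesic into the stratum via a distance-non-increasing projection modelled on the Disarlo--Parlier combing --- is the same strategy the paper uses, so the outline is sound. But the sketch papers over the one step that carries all the content, and the justification you offer for it is incorrect. You claim that the minimal-intersection condition in the definition of edges of $\Acall^{[k]}(S)$ guarantees that the ``new'' arc introduced at each step is already compatible with the distinguished arc $a$, so that $a$ never needs to be sacrificed. That condition only constrains the two exchanged arcs relative to the common $(k-1)$-multiarc of that edge; it says nothing about the fixed arc $a$, and a priori the intermediate multiarcs of a geodesic between two vertices of $\arcgraphstratum{k}{a}{S}$ can contain arcs that essentially intersect $a$. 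Ruling that out \emph{is} the theorem, and no local one-edge surgery can do it: once an intermediate multiarc crosses $a$, you cannot simply ``keep $a$'' at the offending step, because the result would not be a multiarc.

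The paper handles this with a global projection $\pi_{x^+}$ (combing every arc along an orientation of $x$), and the real work is in two lemmas you would also need: (i) combing a $k$-multiarc can make arcs peripheral or make distinct arcs coincide, so one must show that at most one arc of the multiarc ``collapses'' (the lowest one over $x^+$) and that the remaining arcs admit an \emph{injective} assignment into $\pi_{x^+}(\cdot)\cup\{x\}$, so the image is again an honest $k$-multiarc; and (ii) these assignments can be made coherently along the path so that consecutive images are equal or adjacent, yielding a path of the same length all of whose arcs are disjoint from $x$. Separately, the case of a subpath disjoint from $x$ but not containing $x$ requires its own argument --- the paper substitutes $x$ for the arc that is eventually ``sent to'' $x$ and shows the path strictly shortens --- which is what your unproved assertion that the retraction ``strictly shortens any edge leaving the stratum'' stands in for. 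Without these ingredients the proposal is a plan rather than a proof.
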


One expects this type of result to be true for the corresponding multicurve graphs, but nothing of the type is known. 

We then shift our focus to rigidity questions, as in the Erlandsson-Fanoni results alluded to earlier. We show that our graphs exhibit the same strong rigidity properties as the corresponding curve graphs. We state the main result in the following theorem, where non-exceptional just means that we disallow some low complexity cases (see Section \ref{sec:prelim} for the exact definition). 

\begin{thm}
\label{embedding main}
Let $S_1$ and $S_2$ be non-exceptional surfaces such that the complexity $\omega(S_1)$ is at least $7 + k_1$. Let $\phi: \arcgraph{k_1}{S_1} \hra \arcgraph{k_2}{S_2}$ be a simplicial embedding, with $k_2 \geq k_1$, and assume $\omega(S_2) - \omega(S_1) \leq k_2 - k_1$. Then $\omega(S_2) - \omega(S_1) = k_2 - k_1$ and 
\begin{itemize}
\item if $k_2 = k_1$, $\phi$ is an isomorphism induced by a homeomorphism $f: S_1 \ra S_2$; 
\item if $k_2 > k_1$, there exists a $\pi_1$-injective embedding $f: S_1 \ra S_2$ and a $(k_2-k_1)$-multiarc $\nu$ on $S_2$ such that for any $\mu \in \arcgraph{k_1}{S_1}$ we have 
\begin{equation*}
\phi(\mu) = f(\mu) \cup \nu.
\end{equation*}
\end{itemize} 
\end{thm}

Before stating more results, let us make a few comments. In some sense, the fact that these graphs all have the mapping class group as isomorphism group is a particular case of the above theorem (see Theorem \ref{rigidity}), but the above result is of course strictly stronger. A second remark is about our complexity conditions. While it is impossible to be completely free of them, it is unclear to what extent these conditions can be relaxed. In our approach, we use simplicial rigidity phenomena of flip graphs which already have complexity conditions built-in (although it is also unclear how necessary they are even in this case).

Finally, we put these results together to show that under certain complexity conditions, the image of simplicial maps is always geometric:

\begin{thm}
\label{main}
Let $S_1$ and $S_2$ be non-exceptional surfaces such that the complexity $\omega(S_1)$ is at least $7 + k_1$. Let $\phi: \arcgraph{k_1}{S_1} \hra \arcgraph{k_2}{S_2}$ be a simplicial embedding, with $k_2 \geq k_1$, and assume $\omega(S_2) - \omega(S_1) \leq k_2 - k_1$. Then $\phi\left(\arcgraph{k_1}{S_1}\right)$ is strongly convex inside $\arcgraph{k_2}{S_2}$.
\end{thm}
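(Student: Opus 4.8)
The plan is to combine the two preceding theorems in the natural way. By Theorem \ref{embedding main}, a simplicial embedding $\phi\colon\arcgraph{k_1}{S_1}\hra\arcgraph{k_2}{S_2}$ satisfying the stated complexity hypotheses is, up to the homeomorphism case $k_2=k_1$, of the explicit form $\phi(\mu)=f(\mu)\cup\nu$ for a fixed $(k_2-k_1)$-multiarc $\nu$ on $S_2$ and a $\pi_1$-injective embedding $f\colon S_1\ra S_2$. So the image $\phi\big(\arcgraph{k_1}{S_1}\big)$ sits inside the stratum $\arcgraphstratum{k_2}{\nu}{S_2}$, and by Theorem \ref{convexity main} that stratum is strongly convex in $\arcgraph{k_2}{S_2}$. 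Thus it suffices to show that the image is strongly convex \emph{inside the stratum}; strong convexity in the stratum together with strong convexity of the stratum in the ambient graph gives strong convexity of the image in $\arcgraph{k_2}{S_2}$ (a geodesic in the ambient graph between two image vertices stays in the stratum, hence is a geodesic in the stratum, hence stays in the image).

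First I would make the easy reduction: in the case $k_2=k_1$, $\phi$ is an isomorphism induced by a homeomorphism, so the image is the whole graph $\arcgraph{k_2}{S_2}$, which is trivially strongly convex in itself; this case needs no further argument. So assume $k_2>k_1$. Next I would identify $\arcgraphstratum{k_2}{\nu}{S_2}$ with the multiarc graph of the cut surface $S_2\sans\nu$ (equivalently, the subsurface filled by the complement of $\nu$): removing a fixed multiarc $\nu$ of size $k_2-k_1$ from a $k_2$-multiarc leaves a $k_1$-multiarc on $S_2\sans\nu$, and an edge in the stratum is exactly a flip/move disjoint from $\nu$, i.e.\ an edge of $\arcgraph{k_1}{S_2\sans\nu}$. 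Under this identification $\phi$ becomes the map $\mu\mapsto f(\mu)$, which — since $f$ is a $\pi_1$-injective embedding $S_1\ra S_2$ with $\omega(S_2)-\omega(S_1)=k_2-k_1$ forcing $f(S_1)$ to be (isotopic to) the subsurface $S_2\sans\nu$ — is precisely the isomorphism $\arcgraph{k_1}{S_1}\xrightarrow{\sim}\arcgraph{k_1}{S_2\sans\nu}$ induced by a homeomorphism. An isomorphism of graphs has image the whole target, so $\phi\big(\arcgraph{k_1}{S_1}\big)=\arcgraphstratum{k_2}{\nu}{S_2}$, which is strongly convex in $\arcgraph{k_2}{S_2}$ by Theorem \ref{convexity main}. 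That is the whole proof.

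The one point that genuinely requires care — and which I expect to be the main obstacle — is the claim that the complexity equality $\omega(S_2)-\omega(S_1)=k_2-k_1$ forces $f(S_1)$ to be, up to isotopy, the complementary subsurface $S_2\sans\nu$ (so that $\nu$ is disjoint from $f(S_1)$ and $f$ is surjective onto the stratum in the appropriate sense). Theorem \ref{embedding main} already delivers $\omega(S_2)-\omega(S_1)=k_2-k_1$ and the factorization $\phi(\mu)=f(\mu)\cup\nu$, so what remains is a purely topological bookkeeping argument about $\pi_1$-injective embeddings and complexity: an essential subsurface $f(S_1)\subset S_2$ whose complexity deficit equals the number of arcs in the disjoint multiarc $\nu$ must have $\nu$ filling the complement and $f$ an isotopy onto $S_2\sans\nu$. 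I would phrase this as a short lemma in the style of the surface-complexity arguments already used in Section \ref{sec:prelim}, being careful about boundary components created by cutting along $\nu$ versus those of $S_2$, and about the non-exceptional hypothesis ensuring everything is in the stable range. Once that is in place, the two big theorems slot together and the result follows.
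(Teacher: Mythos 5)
Your proposal is correct and follows essentially the same route as the paper, which presents Theorem \ref{main} as a direct combination of Theorems \ref{convexity main} and \ref{embedding main} without further written argument. The one point you rightly flag---that the image is the full stratum $\arcgraphstratum{k_2}{\nu}{S_2}$ because $f$ carries $S_1$ homeomorphically onto $S_2 \sans \nu$---is already built into the paper's construction of $f$ in the proofs of Theorems \ref{1 to k} and \ref{embedding main}, so your outline matches the intended argument.
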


{\bf Acknowledgements.} This project started during a GEAR sponsored visit of the second author at the University of Luxembourg. Both authors acknowledge support from U.S. National Science Foundation grants DMS 1107452, 1107263, 1107367 RNMS: Geometric structures And Representation Varieties (the GEAR Network).

 \section{Preliminaries}\label{sec:prelim}
All surfaces are compact, connected, and orientable with a non-empty finite set of marked points. A boundary component is either an isolated marked point or a boundary curve which is required to have at least one marked point on it. 

An \textit{exceptional} surface is a surface of genus at most one with at most 3 boundary components. Note that the number of marked points on a boundary component is not relevant. For instance, a polygon with any number of vertices is an exceptional surface. 


A \textit{k-multiarc} is a collection of $k$ disjoint arcs. The maximal size of a multiarc is the \textit{complexity} $\omega(S) = 6g + 3b + 3p + q - 6$, where $g$ is the genus, $b$ is the number of boundary components, $p$ is the number of marked points on the interior of $S$, and $q$ is the number of marked points on the boundary of $S$. A maximal size multiarc is called a \textit{triangulation}. 


We define a $k$-multiarc graph, $\arcgraph{k}{S}$, associated to a surface $S$. The vertices of this graph are $k$-multiarcs. Two multiarcs are connected with an edge if they share a $(k-1)$-multiarc, $\nu$, and the remaining two arcs intersect minimally on $S \backslash \nu$. With this definition, $\arcgraph{1}{S}$ is the arc graph and $\arcgraph{\omega(S)}{S}$ is the flip graph.

There is another subgraphs of a $k$-multiarc graph which proves to be useful. If $v$ is a vertex of $\arcgraph{k}{S}$ then \textit{star} of $\nu$, denoted $\St ( \nu )$, is the subgraph spanned by $\nu$ and all its adjacent vertices, in other words the $1$-neighborhood of $\nu$. 

We end this preliminary section by observing that our graphs of choice are all connected. 
\begin{lem} For a surface $S$ and an integer $k$ satisfying $1 \leq k \leq \omega(S)$, the graphs 
$\Acall^{[k]}(S)$ are connected.
\end{lem}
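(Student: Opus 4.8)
The plan is to prove connectivity by induction on $k$, using the known connectivity of both endpoints of the interpolation. The base case $k=1$ is the connectivity of the arc graph $\arcgraph{1}{S}$, which is classical. For the inductive step, I would actually find it cleaner to reduce to two facts: (i) the flip graph $\arcgraph{\omega(S)}{S}$ is connected (a theorem of Mosher/Penner-type, well known for triangulations), and (ii) every $k$-multiarc can be connected, within $\arcgraph{k}{S}$, to a $k$-multiarc that extends to a fixed reference triangulation. Granting (ii), any two $k$-multiarcs $\mu,\mu'$ are each joined to sub-multiarcs $\mu_0\subset T$, $\mu_0'\subset T'$ of triangulations, a path in the flip graph from $T$ to $T'$ can be ``projected'' to a path in $\arcgraph{k}{S}$ (each flip changes one arc, so one may track a size-$k$ subset along the way, dropping and re-adding arcs as needed), and concatenation finishes the argument.

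Concretely, the key steps in order are: first, establish the base case $k=1$ by citing connectivity of the arc graph. Second, prove a ``descent'' move: if $\nu$ is a $k$-multiarc and $\alpha\notin\nu$ is an arc with $\nu\cup\{\alpha\}$ a $(k+1)$-multiarc, then for any $\beta\in\nu$, the multiarcs $\nu$ and $(\nu\setminus\{\beta\})\cup\{\alpha\}$ can be connected in $\arcgraph{k}{S}$; this is because on the subsurface $S\setminus(\nu\setminus\{\beta\})$ the two remaining arcs $\beta$ and $\alpha$ are disjoint, hence a fortiori at minimal position, so in fact $\nu$ and $(\nu\setminus\{\beta\})\cup\{\alpha\}$ are already adjacent (or equal). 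Third, use this together with connectivity of $\arcgraph{k-1}{S}$ (inductive hypothesis): given two $k$-multiarcs, remove one arc from each to get $(k-1)$-multiarcs, connect those by an inductive path, and lift the path one step at a time, at each stage using the descent move to swap arcs in and out so that the lifted vertices stay $k$-multiarcs and consecutive ones stay adjacent. Fourth, handle the ``seam'' where arcs must be added: at a vertex of the $(k-1)$-level path, one needs some arc disjoint from the current $(k-1)$-multiarc to pad it up to size $k$; such an arc exists precisely because $k\leq\omega(S)$, so the $(k-1)$-multiarc is not maximal.

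I expect the main obstacle to be the bookkeeping in the lift: when projecting a path from the $(k-1)$-multiarc graph up to the $k$-multiarc graph, a single edge downstairs (two arcs at minimal position on a subsurface) need not lift to a single edge upstairs, because adding back a $k$-th arc changes the ambient subsurface in which minimality is measured, and the ``extra'' arc chosen at one vertex may intersect the arcs being flipped at the next. The fix is to interleave: first move the extra arc out of the way (using the descent/swap move, which only requires disjointness, not minimality, and is therefore robust), then perform the flip, then bring an extra arc back. Making sure this interleaving always terminates and genuinely produces edges of $\arcgraph{k}{S}$ — in particular that "minimal intersection on $S\setminus\nu$" is preserved or re-achieved at each micro-step — is the one place where a careful case analysis is needed; everything else is a routine induction. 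An alternative that sidesteps some of this is to prove directly that $\arcgraph{k}{S}$ retracts onto $\arcgraph{\omega(S)}{S}$ via a coarse ``complete to a triangulation'' map and invoke connectivity of the flip graph, but verifying that such a map is coarsely well defined involves essentially the same swap-move estimates, so I would present the inductive argument above.
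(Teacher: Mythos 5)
Your proposal follows essentially the same route as the paper: induct on $k$ with the arc graph as base case, connect the $(k-1)$-multiarcs obtained by deleting one arc from each endpoint, and lift the resulting path by shepherding the extra $k$-th arc through the complementary subsurfaces (the paper does this via connectivity of the arc graph of $S\setminus\gamma_i$, you via disjoint-swap moves, which amounts to the same thing). The only point to watch is the top case $k=\omega(S)$, where your swap move is unavailable because no $(k+1)$-multiarc exists; there you should fall back on your fact (i), the classical connectivity of the flip graph, but otherwise the argument matches the paper's.
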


\begin{proof}
We give a proof by induction on $k$. The base case, for $k=1$, is the connectivity of the arc graph, which is well known.

Assume $\Acall^{[k-1]}(S)$ is connected for every surface satisfying $\omega(S) \geq k-1$. 
Consider a surface $S$ with complexity at least $k$ and two $k$-multiarcs $\alpha = \{a_1, \ldots a_k \}$ and $\beta = \{b_1, \ldots, b_k\}$.
By assumption, there is a path $\alpha \backslash a_k = \gamma_0, \gamma_1, \gamma_2, \ldots, \gamma_n =\beta \backslash b_k$ in $\Acall^{[k-1]}(S)$. Let $c_0=a_k$ and $c_i = \gamma_i \backslash \gamma_{i-1}$ for $0<i\leq n$. 
We know $c_0$ is connected to $c_1$ in $\arcgraph{1}{S\backslash (\gamma_0 \cap \gamma_1)}$, say by the path $c_0 = d_0, d_1, \ldots d_{m_1} = c_1$. Then we create the path $\delta^0 = \{\delta^0_i\}_{i = 0}^{m_1}$, in $\arcgraph{k}{S}$, where $\delta^0_i = \gamma_0 \cup d_i$. Similarly, we define paths $\delta^i$ arising from a path connecting $c_i$ to $c_{i+1}$. Concatenating $\{\delta^i \}_{i = 0}^n$ gives a path between $\alpha$ and $\beta$ in $\arcgraph{k}{S}$.

\end{proof}

\section{Convexity}
In this section, we show that our graphs have nice convexity properties, namely that geodesics between multiarcs which share an arc also share this arc:

\begin{thm}
Let $k' \leq k$. For any $k'$-multiarc $\nu$, $\arcgraphstratum{k}{\nu}{S}$ is strongly convex.
\end{thm}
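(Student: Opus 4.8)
The plan is to reduce the general statement to the case $k' = 1$ — i.e., to the claim that a single shared arc is retained along any geodesic — and then to run an induction that mimics the connectivity argument already used in Section 2, but now tracking lengths carefully. Suppose first that $\nu = \{c\}$ is a single arc, and let $\alpha, \beta \in \arcgraphstratum{k}{c}{S}$ be two $k$-multiarcs both containing $c$. Cutting along $c$ produces a surface $S \backslash c$ with $\omega(S\backslash c) = \omega(S) - 1$ (removing one arc drops the complexity by exactly one), and $\alpha \backslash c$, $\beta \backslash c$ are $(k-1)$-multiarcs on $S\backslash c$. The key observation I would establish is that the obvious inclusion $\arcgraph{k-1}{S\backslash c} \hookrightarrow \arcgraphstratum{k}{c}{S}$, sending $\mu \mapsto \mu \cup c$, is an \emph{isometric} embedding onto its image: two $k$-multiarcs containing $c$ are adjacent in $\arcgraph{k}{S}$ exactly when they share a common $(k-1)$-sub-multiarc and the two leftover arcs intersect minimally, and since $c$ lies in that common sub-multiarc, this is precisely the adjacency condition in $\arcgraph{k-1}{S\backslash c}$. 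Hence distances computed "upstairs" among multiarcs containing $c$ agree with distances computed "downstairs" in the smaller arc graph.

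Granting that, strong convexity of $\arcgraphstratum{k}{c}{S}$ inside $\arcgraph{k}{S}$ amounts to showing that a geodesic in $\arcgraph{k}{S}$ between two vertices of the stratum never leaves the stratum. Here I would argue by contradiction: if some geodesic $\alpha = \mu_0, \mu_1, \ldots, \mu_n = \beta$ has $\mu_i \not\supset c$ for some $i$, then at the first and last such indices the arc $c$ must be "flipped out" and later "flipped back in". I would show that such a detour can always be shortcut — the sub-path from the last vertex containing $c$ before the detour to the first vertex containing $c$ after it can be replaced by a path inside the stratum of the same or smaller length, using the isometric-embedding fact together with connectivity of $\arcgraph{k-1}{S \backslash c}$ and an exchange/surgery argument on the arc being modified (this is exactly the mechanism of the connectivity lemma, now upgraded to respect geodesic length). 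Once the single-arc case is done, the general case $\nu = \{c_1, \ldots, c_{k'}\}$ follows by iterating: $\arcgraphstratum{k}{\nu}{S}$ is the intersection of the strata $\arcgraphstratum{k}{c_j}{S}$, and intersection of strongly convex sets is strongly convex, or equivalently one peels off the $c_j$ one at a time, cutting the surface successively and invoking the $k'=1$ result at each stage (noting $\omega(S) - k' \geq k - k' \geq 0$ so all intermediate surfaces have enough complexity).

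The step I expect to be the genuine obstacle is the shortcutting argument — proving that a geodesic which leaves the stratum can be replaced by one of no greater length staying inside it. Connectivity alone (Section 2) is not enough; one needs quantitative control, essentially a "no wasted moves" principle saying that flipping $c$ out and back in cannot be part of an efficient route. I would try to extract this from the structure of flips: a move that alters an arc disjoint from $c$ commutes, in an appropriate sense, with the presence of $c$, so any geodesic segment straying outside the stratum can be projected back in move-by-move without increasing length. Handling the bookkeeping of which arc is being modified at each step, and ruling out the possibility that removing $c$ genuinely opens a shorter corridor, is where the care is needed; I suspect the cleanest route is to first prove the isometric-embedding claim in full and then observe that a geodesic with a detour would contradict minimality of the downstairs distance.
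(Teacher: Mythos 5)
Your reduction to the single-arc case and your identification of the stratum with $\arcgraph{k-1}{S\backslash c}$ as an induced subgraph are both fine and consistent with how the paper implicitly frames the problem, and your first shortcutting mechanism (a detour all of whose intermediate multiarcs are \emph{disjoint} from $c$ can be undone by re-inserting $c$ in place of the arc that was flipped out, strictly shortening the path) is exactly the paper's opening case. But there is a genuine gap at precisely the point you flag as the obstacle: you have no concrete argument for a detour in which the intermediate multiarcs contain arcs that \emph{intersect} $c$. Your proposed fix --- that a move altering an arc disjoint from $c$ ``commutes with the presence of $c$'' --- only re-treats the easy case; it says nothing about how to modify a vertex of the geodesic one of whose arcs crosses $c$, and calling the inclusion an ``isometric embedding'' is circular here, since that statement is equivalent to the convexity you are trying to prove rather than an observation you can start from.

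The paper closes this gap with a projection built from the combing map $\pi_{x^+}$ of Disarlo--Parlier: an arc $b$ crossing the oriented arc $x^+$ is combed along $x^+$ into a \emph{collection} of arcs, and the real work (Lemmas \ref{one collapse} and \ref{assignment}) is to show that one can injectively assign to each arc of a $k$-multiarc a single arc of its combed image (with exactly one arc ``collapsing'' onto $x$ or a peripheral arc), consistently from one vertex of the detour to the next, so that the projected sequence is again a valid path in $\arcgraph{k}{S}$ of the \emph{same} length whose vertices are all disjoint from $x$. This reduces the hard case to the easy one. The obstruction your sketch does not confront is exactly that combing is one-to-many, so there is no canonical ``move-by-move'' replacement; without the assignment lemma the projected objects need not even be $k$-multiarcs, let alone consecutive ones forming a path. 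Your plan is a correct skeleton, but the theorem's content lives in the step you left open.
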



\begin{proof}
Say the multiarcs $\alpha, \beta \in \arcgraph{k}{S}$ have exactly one arc in common, $x$. 
Consider a geodesic $g$ between $\alpha$ and $\beta$ in $\arcgraph{k}{S}$, where $g$ is $\alpha = \gamma_0, \gamma_1, \ldots, \gamma_n = \beta$. 
Our goal is to show $x \in \gamma_i$ for all $0 \leq i \leq n$. 
Assume not, then there exists at least one subpath $\gamma_j, \gamma_{j+1}, \ldots, \gamma_m$, $j >0$, $m < n$, where no multiarc in the subpath contains $x$. 

Assume $\left(\cup_{t=j}^m \gamma_t\right)\cap x = \emptyset$. There is one arc in $\gamma_m \backslash \gamma_{m+1}$, denote this arc as $b$; we say that $b$ is sent to $x$.
Counting backwards from $m$, let $i$ be the first number such that $b \notin \gamma_i$ but $b \in \gamma_{i+1}$, note $j \leq i \leq m-1$. 
Then $\gamma_i$ and $\gamma_{i+1}$ differ by one arc, $b'$ and $b$. Since $\gamma_i \cap x = \emptyset$, $b'$ is disjoint from $x$, so we can replace $\gamma_{t}$ with $\gamma '_{t} = \gamma_{t} \backslash b \cup x$ for all $t$ between $i+1$ and $m$. Consequently, $\gamma '_m = \gamma_{m+1}$, so this new path, $\alpha = \gamma_0, \ldots \gamma_i, \gamma '_{i+1}, \ldots \gamma '_{m}, \gamma_{m+2}, \ldots, \gamma_n = \beta$ is shorter than $g$, contradicting the fact that $g$ is a geodesic. 

Therefore we can assume $\left(\displaystyle\cup_{t=j}^m \gamma_t\right)\cap x \neq \emptyset$. Using the same argument as above, we may assume that $\gamma_{j+1}, \ldots, \gamma_{m-1}$ all contain an arc that intersects $x$. Pick an orientation of $x$ and denote $x^+$ as the oriented arc. Let $b$ be any other arc on $S$, define $\pi_{x^+}(b)$ as follows:
\begin{itemize}
\item if $i(x,b) = 0$ then $\pi_{x^+}(b) = b$
\item if $i(x,b) > 0$ then $\pi_{x^+}(b)$ is the collection of arcs obtained by {\it combing} $b$ along $x^+$, as shown in Figure \ref{combing arcs}. Note that each arc in $\pi_{x^+}(b)$ has at least one endpoint that coincides with $x$. 
\end{itemize} 
For a multiarc $\alpha = \{a_1, \ldots, a_n \}$, define $\pi_{x^+}(\alpha) = \cup_{i = 1}^n \pi_{x^+}(a_i)$.

Note that in \cite{Disarlo-Parlier} this same map is defined for triangulations, so when $k=\omega(S)$. 

\begin{figure}[h]
\centering
\includegraphics[trim = {0in, 6.2in, 1.3in, 3.3in}, clip, scale = .75]{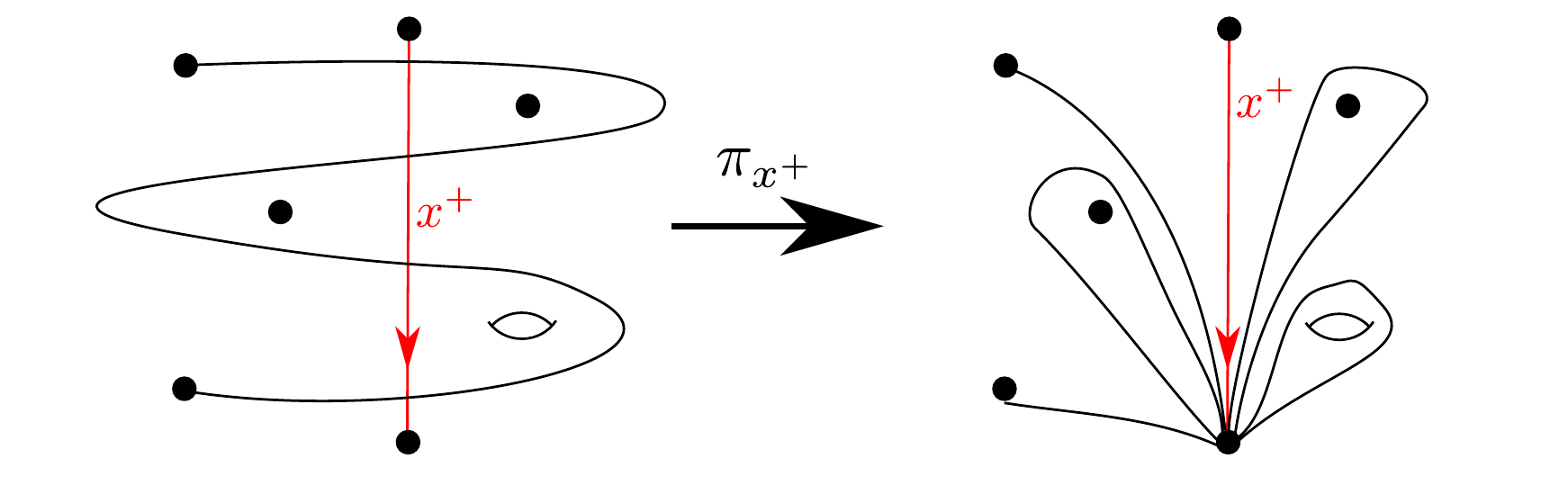} 
\caption{An example of an arc being combed along $x^+$.}
\label{combing arcs}
\end{figure}

%

Let $\alpha = \{a_1, a_2, \ldots, a_k\}$, where $a_1$ is the first arc that intersects $x$ looking up from $x^+$, $a_t$ is the last arc that intersects $x$, and $a_i$, for $t < i \leq k$, does not intersect $x$. 
\begin{lem}
\label{one collapse}
The arc $a_1$ is the only arc in $\{a_1, \ldots a_t\}$ such that $\pi_{x^+}(a_1)$ is either contained in $\alpha$ or is peripheral. 
\end{lem}

\begin{proof}
Under the $\pi_{x^+}$ ma, $a_i$, for $1 < i \leq t$, is combed towards the endpoint of $x^+$. Since $a_1$ is below $a_i$, any arc created under the action of $\pi_{x^+}$ will intersect $a_1$ or be in $\pi_{x^+}(a_1)$, see Figure \ref{collapsing arcs}. Since $a_1$ and $a_i$ are distinct arcs $\pi_{x^+}(a_i) \neq \pi_{x^+}(a_1)$, so $\pi_{x^+}(a_i)$ must contain an arc that is not in $\alpha$ or peripheral. 
\end{proof}

Note, if $\pi_{x^+}(a_1)$ is either in $\alpha$ or is peripheral then we say $a_1$ {\it collapses}.

\begin{figure}[h]
\centering
\includegraphics[trim = {0in, 6in, 0in, 2.5in}, clip, scale = .75]{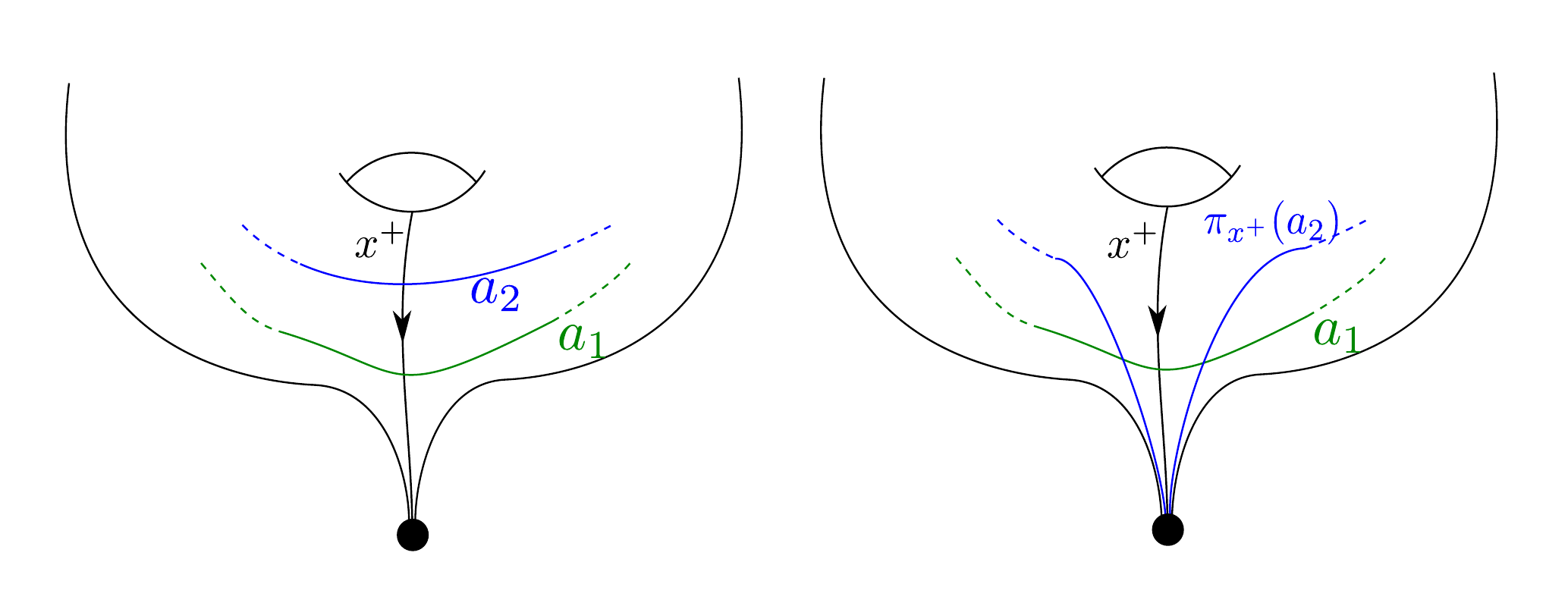}
\caption{}
\label{collapsing arcs}
\end{figure}

\begin{lem}
\label{assignment}
There is at least one way to assign each arc in $\alpha$ to an arc in $\pi_{x^+}(\alpha) \cup x$ such that the assignment is injective. Furthermore, an arc $a \in \alpha$ is assigned an arc in $\pi_{x^+}(a)$.
\end{lem}
\begin{proof}
First, complete the $k$-multiarc $\alpha$ to a triangulation $\alpha_{T}$ on $S$. As we have a triangulation, we are in the situation described in \cite{Disarlo-Parlier} and in this case $\pi_{x^+}(\alpha_T)$ contains $\omega(S) - 1$ arcs. It follows that one arc in $\alpha_T$ must collapse under $\pi_{x^+}$. 

We know by Lemma \ref{one collapse}, the first arc as we look up from $x^+$ in $\alpha_{T}$ is the arc which collapses; we assign this arc to $x$. Then every other arc in $\alpha_{T}$ can be assigned to an arc in $\pi_{x^+}(\alpha_T)$ such that the assignment is injective and for each arc $a \in \alpha_T$, $a$ is assigned an arc in $\pi_{x^+}(a)$. The assignments of the arcs in $\alpha$ define the assignment we want. 
\end{proof}

Recall the geodesic $g$ from $\alpha$ to $\beta$ and the subpath $\gamma_j, \gamma_{j+1}, \ldots, \gamma_m$. Assume we have modified the path with $\gamma_{i+1}', \ldots \gamma_{m}'$ where $\gamma_t'$ is an assignment as described in Lemma \ref{assignment}, for all $i+1 \leq t \leq m$, for some $i$ between $j$ and $m$. We define $\gamma_{i}'$ by assigning each arc in $\gamma_{i}$ to an arc in $\pi_{x^+}(\gamma_i)$ as follows. 
By definition of a path between $k$-multiarcs, $\gamma_{i}$ differs from $\gamma_{i+1}$ by one arc; call this arc $y$. 
Assume $y$ is sent to $z$ in $\gamma_{i+1}$. 
Assign each arc in $\gamma_{i}\backslash y$ as it was assigned in $f_{x^+}(\gamma_{i
+1})$. 
Then assign $y$ as follows: 
\begin{itemize}
\item If $\pi_{x^+}(y)$ is empty then assign $y$ to $x$. 
\item If $\pi_{x^+}(y)$ contains an arc not already assigned to an arc in $\gamma_i \backslash y$, then assign $y$ to that arc.
\item If all arcs in $\pi_{x^+}(y)$ are assigned to an arc in $\gamma_{i}\backslash y$ already then we need to reassign some arcs in $\gamma_{i} \backslash y$. 
Assign $y$ to an arc in $\pi_{x^+}(y)$, call it $y'$. Then find the arc in $\gamma_{i}$ assigned to $y'$ and reassign it to an arc in its image under $\pi_{x^+}$. Work backwards like this until all arcs in $\gamma_{i}$ are assigned properly; this is possible by Lemma \ref{assignment}.
\end{itemize}
These assignments define $\gamma_{i}'$.

Starting with $\gamma_{m-1}$, set $\gamma_{m-1}'$ to be the arcs in $\pi_{x^+}(\gamma_{m-1})$ assigned to the arcs in $\gamma_{m-1}$ as described in Lemma \ref{assignment}. Working backwards, define $\gamma_i'$ for $j \leq i \leq m-2$ using $\gamma_{i+1}'$ as described above. Now we have a valid path of the same length where $\left(\cup_{t=j}^n \gamma_t\right)\cap x = \emptyset$. Therefore, by the arguments above, this path is not a geodesic. The result follows.
\end{proof}
\section{Simplicial embeddings}

In this section we show, given surfaces under certain complexity conditions, that any simplicial map between arc graphs from the surface of largest complexity to the other is induced by a homeomorphism. We then promote this result to simplicial embeddings between arc graphs and multiarc graphs.

The following lemma is one of the key tools:
\begin{lem}
\label{permute}
If a complete subgraph contains 3 vertices that permute $k+1$ arcs (i.e. the intersection between the 3 vertices contains $k-2$ arcs) then every vertex in the complete subgraph permutes those $k+1$ arcs.
\end{lem}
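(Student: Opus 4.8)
The plan is to argue purely combinatorially, using only the ``sharing'' half of the edge relation: if $u \neq w$ are adjacent vertices of $\arcgraph{k}{S}$ then they have a common $(k-1)$-multiarc, so $|u \cap w| = k-1$ (the minimal-intersection condition plays no role). First I would set up notation. Let $v_1, v_2, v_3$ be the three given vertices and put $A = v_1 \cup v_2 \cup v_3$, a set of exactly $k+1$ arcs by hypothesis. Each $v_i$ is a $k$-element subset of $A$ and hence omits a single arc of $A$; since the $v_i$ are distinct these three omitted arcs are distinct. Relabelling, write $A = \{a_1, \dots, a_{k+1}\}$ with $v_i = A \setminus \{a_i\}$ for $i = 1,2,3$ (so that $a_4, \dots, a_{k+1}$ are the $k-2$ arcs lying in all three vertices, matching the parenthetical).

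Next, take an arbitrary vertex $w$ of the complete subgraph. If $w \in \{v_1, v_2, v_3\}$ there is nothing to do, so assume $w$ is distinct from each $v_i$. Adjacency then gives $|w \cap v_i| = k-1$ for $i=1,2,3$, i.e.\ $w$ consists of all but one arc of $v_i$ together with one extra arc not in $v_i$. Reading this for $i=1$: there are an index $i_1 \in \{2, \dots, k+1\}$ and an arc $b \notin v_1$ with $w = (v_1 \setminus \{a_{i_1}\}) \cup \{b\}$. It now suffices to prove $b = a_1$, for then $w \subseteq A$, and since $|w| = k = |A|-1$ the vertex $w$ omits exactly one arc of $A$, i.e.\ $w$ permutes those $k+1$ arcs.

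Assume for contradiction $b \neq a_1$. Since $b \notin v_1 = A \setminus \{a_1\}$, this means $b \notin A$, so $b$ lies in no $v_i$. I would then simply compute $w \cap v_2$ and $w \cap v_3$ from the description $w = \bigl(\{a_2, \dots, a_{k+1}\} \setminus \{a_{i_1}\}\bigr) \cup \{b\}$. Intersecting with $v_2 = A \setminus \{a_2\}$ produces a set of size $k-1$ when $i_1 = 2$ and of size $k-2$ otherwise; since $|w \cap v_2| = k-1$ we must have $i_1 = 2$. Symmetrically, intersecting with $v_3 = A \setminus \{a_3\}$ and using $|w \cap v_3| = k-1$ forces $i_1 = 3$. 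As $2 \neq 3$ this is absurd, so $b = a_1$ and $w$ permutes $A$.

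I do not expect a genuine obstacle here; the content is the bookkeeping of intersection cardinalities, which is elementary. The two places to be a little careful are: (i) the degenerate case $w \in \{v_1, v_2, v_3\}$ must be separated out at the start, since the step ``$|w \cap v_i| = k-1$'' presupposes $w \neq v_i$; and (ii) all three of $v_1, v_2, v_3$ are genuinely needed — from $v_1$ and $v_2$ alone one could take $w = (v_1 \cap v_2) \cup \{b\}$ with $b$ a new arc, which shares $k-1$ arcs with both yet does not permute $A$, so the presence of $v_3$ (used in the final contradiction) is exactly what makes the hypothesis strong enough. No complexity restriction on $S$ is needed; the statement is non-vacuous precisely when $k \geq 2$, and the argument works there verbatim.
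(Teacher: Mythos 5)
Your proof is correct and takes essentially the same route as the paper's: both assume for contradiction that the extra vertex contains an arc outside the $k+1$ and then derive incompatible constraints from the requirement of sharing $k-1$ arcs with each of the three given vertices. Your write-up is a bit more carefully organized (isolating the degenerate case $w \in \{v_1,v_2,v_3\}$ and forcing the omitted index to be simultaneously $2$ and $3$), but the combinatorial content is identical to the paper's two-case analysis.
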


\begin{proof}
Let $\alpha$, $\beta$, and $\gamma$ be the three vertices that permute $k+1$ arcs, let $\alpha = \{ a_1, \ldots a_k \}$. Denote by $\nu_i = \alpha \backslash a_i$. 
Without loss of generality, $\beta = \nu_k \cup b_k$ and $\gamma = \nu_j \cup b_k$ for some $j \neq k$. 
Take $\delta$ to be another multiarc in the complete subgraph, recalling $\delta$ is adjacent to $\alpha$, $\beta$, and $\gamma$. 
By contradiction, suppose that $\delta$ contains an arc $d$ not equal to any arc in $\alpha$, $\beta$, or $\gamma$. 
If $\delta$ contains $b_k$, then $\alpha$ and $\delta$ differ by two arcs, contradicting the fact that $\alpha$ and $\delta$ are adjacent. 
If $\delta$ does not contain $b_k$, then to be adjacent to $\beta$, $\delta = \nu_k \cup d$. In this case, $\gamma$ and $\delta$ differ by two arcs, contradicting the fact that $\beta$ and $\delta$ are adjacent. 
Therefore, $\delta = \nu_i \cup b_k$, for some $i \neq j, k$.
\end{proof}

The main trick in the following proof is the observation that a simplicial map between arc graphs induces a simplicial map between flip graphs. This is because triangulations correspond to maximal subgraphs in arc graphs and you can ``see" flips by seeing which maximal subgraphs share near maximal graphs. 

\begin{thm}
\label{base case}
Let $S_1$ and $S_2$ be two surfaces of complexity at least $7$ such that $\omega(S_2) \leq \omega(S_1)$. Let $\phi: \Acall(S_1) \hra \Acall(S_2)$ be a simplicial embedding. Then $\phi$ is an isomorphism induced by a homeomorphism $f: S_1 \ra S_2$.
\end{thm}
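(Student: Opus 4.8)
The plan is to leverage the observation highlighted just before the statement: a simplicial embedding of arc graphs induces a simplicial embedding of flip graphs, and then invoke the known rigidity of flip graphs (the result of Aleksandrov--Korkmaz--Papadopoulos-type, or the one cited as \cite{AKP}) together with a consistency argument to recover $f$ at the level of single arcs.

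First I would make precise the correspondence between triangulations and maximal complete subgraphs. A triangulation $T$ of $S_1$ is a maximal multiarc, and the set of single arcs contained in $T$ spans a maximal complete subgraph $K_T$ of $\Acall(S_1)$; conversely every maximal clique arises this way. Since $\phi$ is a simplicial embedding, it sends $K_T$ into a complete subgraph of $\Acall(S_2)$. The key point is that $\phi(K_T)$ has $\omega(S_1)$ vertices, and since $\omega(S_2) \le \omega(S_1)$, a complete subgraph on $\omega(S_1)$ vertices in $\Acall(S_2)$ forces $\omega(S_2) = \omega(S_1)$ and forces $\phi(K_T)$ to be a maximal clique, i.e. the arc set of a triangulation $\phi_\ast(T)$ of $S_2$. (Here one needs that the clique number of $\Acall(S)$ equals $\omega(S)$, which is clear since disjoint arcs of a triangulation pairwise span edges and no larger disjoint collection exists.) This already yields $\omega(S_1) = \omega(S_2)$.

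Next I would check that two triangulations $T, T'$ of $S_1$ related by a flip are sent to triangulations $\phi_\ast(T), \phi_\ast(T')$ of $S_2$ related by a flip. Indeed $T$ and $T'$ share a submultiarc $\nu$ of size $\omega(S_1)-1$, so $K_T$ and $K_{T'}$ share the complete subgraph on these $\omega(S_1)-1$ common arcs; applying $\phi$ and using injectivity, $\phi_\ast(T)$ and $\phi_\ast(T')$ share $\omega(S_1)-1 = \omega(S_2)-1$ arcs, hence differ by a flip (two distinct triangulations sharing a codimension-one multiarc are flip-related). Lemma \ref{permute} should be used here to rule out degenerate configurations and to confirm that the induced vertex map on cliques is well-defined and compatible with adjacency. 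Thus $\phi$ induces a simplicial embedding $\Phi: \Fcal(S_1) \hra \Fcal(S_2)$ of flip graphs between surfaces of equal complexity at least $7$.

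Now I would apply the rigidity theorem for flip graphs from \cite{AKP}: such a $\Phi$ is an isomorphism induced by a homeomorphism $f: S_1 \to S_2$. It remains to promote $f$ back to the original map $\phi$ on arc graphs. For a single arc $a$ of $S_1$, the vertex $\phi(a) \in \Acall(S_2)$ lies in $\phi(K_T)$ for every triangulation $T \ni a$, hence $\phi(a) \in \bigcap_{T \ni a} (\text{arcs of } f(T)) = \bigcap_{T \ni a} f(\text{arcs of } T)$. Since $f$ is a homeomorphism this equals $f\!\left(\bigcap_{T\ni a} \text{arcs of } T\right) = f(a)$, because any arc lying in every triangulation containing $a$ must equal $a$ (given two distinct arcs, some triangulation contains one but not the other, using $\omega(S_1)\ge 7$ so there is enough room). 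Hence $\phi(a) = f(a)$ for all vertices, so $\phi$ is the isomorphism induced by $f$.

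The main obstacle I expect is the middle step: cleanly verifying that the clique-to-triangulation correspondence is a bijection and that flip-adjacency is transported faithfully — i.e. that $\phi_\ast(T) \ne \phi_\ast(T')$ when $T \ne T'$ and that sharing a size-$(\omega-1)$ multiarc is preserved in both directions. This requires Lemma \ref{permute} to control what happens to near-maximal complete subgraphs and to exclude the possibility that $\phi$ collapses the "direction" of a flip. Everything else is either bookkeeping (clique numbers, equality of complexities) or a black-box application of flip-graph rigidity.
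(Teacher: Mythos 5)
Your proposal is correct and follows essentially the same route as the paper's proof: maximal cliques in $\Acall(S)$ are triangulations, the complexity bound forces $\omega(S_1)=\omega(S_2)$, flip moves are detected as pairs of maximal cliques sharing $\omega-1$ vertices, and flip-graph rigidity then supplies the homeomorphism. Your added final step recovering $\phi(a)=f(a)$ arc by arc is a detail the paper compresses into ``and therefore $f$ induces $\phi$,'' and is a welcome elaboration rather than a divergence.
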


\begin{proof}
For any surface $S$ the maximum complete subgraph in $\Acall(S)$ is of size $\omega(S)$. Since $\phi$ is a simplicial embedding, a complete graph in $\Acall(S_1)$ is mapped to a complete graph of the same size in $\Acall(S_2)$. Therefore $\omega(S_1) \leq \omega(S_2)$, and by assumption $\omega(S_2) \leq \omega(S_1)$ so $\omega(S_1) = \omega(S_2)$. 

A maximum complete graph in $\Acall(S)$ corresponds to a triangulation, so we know that triangulations in $\Acall(S_1)$ are mapped to triangulations in $\Acall(S_2)$. 
Flip moves are also preserved since they correspond to two maximum complete graphs that intersect in $\omega(S_1) - 1$ vertices. 
Hence $\phi$ induces a map $\overline \phi: \Fcal(S_1) \hra \Fcal(S_2)$ between flip graphs that is a simplicial embedding. By \cite{Korkmaz-Papadopoulos} there is a homeomorphism $f: S_1 \ra S_2$ that induces $\overline \phi$, and therefore $f$ induces $\phi$. 
\end{proof}

The following result concerns topological types of arcs, and will be a tool that we use later. Two arcs, $a_1$ on $S_1$ and $a_2$ on $S_2$ are of same topological type if there is homeomorphism $\varphi:S_1\to S_2$ sending $a_1$ to $a_2$. As we've just seen that simplicial embeddings under certain complexity conditions come from homeomorphisms, we immediately get the following. 
\begin{coro}\label{arcs}
Let $S_1$ and $S_2$ be two surfaces of complexity at least $7$ such that $\omega(S_2) \leq \omega(S_1)$. Let $\phi: \Acall(S_1) \hra \Acall(S_2)$ be a simplicial embedding. Then $\phi$ sends arcs of $S_1$ to arcs of the same topological type. 
\end{coro}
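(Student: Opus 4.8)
The plan is to derive this directly from Theorem~\ref{base case}. First I would apply that theorem to the given simplicial embedding $\phi\colon \Acall(S_1) \hra \Acall(S_2)$, obtaining a homeomorphism $f\colon S_1 \to S_2$ which induces $\phi$. Since $\Acall(S) = \arcgraph{1}{S}$ has single arcs as its vertices, the statement ``$f$ induces $\phi$'' unwinds to: $\phi(a) = f(a)$ for every arc $a$ on $S_1$, where $f(a)$ denotes the image arc on $S_2$.

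Second, I would verify that $a$ and $\phi(a) = f(a)$ are of the same topological type. By the definition recalled just above the corollary, this requires a homeomorphism $S_1 \to S_2$ sending $a$ to $f(a)$, and $f$ itself serves as this witness. Hence the conclusion is immediate.

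I do not expect any genuine obstacle: all of the substance is already contained in Theorem~\ref{base case}, and the corollary is essentially a reformulation of that conclusion in the language of topological types of arcs. The only point to be mildly careful about is the bookkeeping convention that a simplicial map induced by a homeomorphism $f$ acts on a vertex $a$ by $a \mapsto f(a)$; once this is made explicit (which it is in the proof of Theorem~\ref{base case}, where $f$ is constructed precisely so that its induced map equals $\phi$), there is nothing further to check.
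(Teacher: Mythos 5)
Your proposal is correct and matches the paper exactly: the corollary is stated as an immediate consequence of Theorem~\ref{base case}, with the homeomorphism $f$ inducing $\phi$ serving directly as the witness that each arc $a$ and its image $\phi(a)=f(a)$ have the same topological type. No further commentary is needed.
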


An {\it ear} on a surface $S$ is an arc $e$ such that $S\setminus e$ has a triangle as a connected component. A non-separating arc is an arc $a$ such that $S\setminus a$ is non-separating. So a particular consequence of the above result is that ears are sent to ears and non-separating arcs to non-separating arcs.

We now prove the following result which generalizes the above results to embeddings of arc graphs into multiarc graphs. 

\begin{thm}
\label{1 to k}
Let $S_1$ and $S_2$ be two non-exceptional surfaces of complexity at least $7$ such that $\omega(S_2) - \omega(S_1) \leq k-1$ and $k > 1$. Let $\phi: \Acall(S_1) \hra \arcgraph{k}{S_2}$ be a simplicial embedding. Then $\omega(S_2) - \omega(S_1) = k-1$ and there exists a $\pi_1$ injective embedding $f: S_1 \ra S_2$ and a $(k-1)$-multiarc $\nu$ on $S_2$ such that for any $\mu \in \arcgraph{1}{S_1}$ we have $\phi(\mu) = f(\mu) \cup \nu$.
\end{thm}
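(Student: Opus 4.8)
The plan is to reduce to Theorem~\ref{base case}: I will show that $\phi$ inserts a \emph{fixed} $(k-1)$-multiarc $\nu$ into every image $\phi(\mu)$, so that $\mu\mapsto\phi(\mu)\setminus\nu$ is a simplicial embedding of $\arcgraph{1}{S_1}$ into the arc graph of $S_2\setminus\nu$, and then recognize this latter map as coming from a homeomorphism $S_1\to S_2\setminus\nu$, hence from a $\pi_1$-injective embedding $S_1\to S_2$. The starting point is to fix a triangulation $T$ of $S_1$: it spans a complete subgraph of $\arcgraph{1}{S_1}$ of size $\omega(S_1)$, so $K:=\phi(T)$ spans a complete subgraph of $\arcgraph{k}{S_2}$ of the same size. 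In any complete subgraph of $\arcgraph{k}{S_2}$ two distinct vertices meet in exactly $k-1$ arcs, and three vertices meet in $k-1$ or $k-2$ arcs (the lower bound $k-2$ is forced by inclusion--exclusion); in the latter case they ``permute $k+1$ arcs'' in the sense of Lemma~\ref{permute}. By that lemma, if some triple of $K$ permutes $k+1$ arcs then every vertex of $K$ is a $k$-subset of one fixed $(k+1)$-element set of arcs, so $|K|\le k+1$. Otherwise every triple of $K$ shares $k-1$ arcs; since two chosen vertices of $K$ already share exactly $k-1$ arcs, every other vertex must contain that common $(k-1)$-multiarc $\nu_T$, so all of $K$ contains $\nu_T$, and $\{v\setminus\nu_T:v\in K\}$ is a multiarc of $S_2\setminus\nu_T$ with $\omega(S_1)$ arcs, giving $\omega(S_1)\le\omega(S_2\setminus\nu_T)=\omega(S_2)-(k-1)$.

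\textbf{Excluding the permutation case and fixing $\nu$.} Suppose $\phi(T)$ permutes a $(k+1)$-set $\sigma$ (note this forces $k\ge\omega(S_1)-1\ge 6$). If $T'$ is obtained from $T$ by a flip, then $\phi(T)\cap\phi(T')=\phi(T\cap T')$ has $\omega(S_1)-1\ge 6$ vertices, all distinct $k$-subsets of $\sigma$; their intersection has at most $(k+1)-6=k-5<k-1$ arcs, too few to contain any $(k-1)$-multiarc, so $\phi(T')$ is not ``based'' and hence permutes some $(k+1)$-set $\sigma'$; as two distinct $(k+1)$-sets share at most one $k$-subset, $\sigma'=\sigma$. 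By connectivity of the flip graph of $S_1$, every triangulation of $S_1$, hence every arc of $S_1$, would then be sent by $\phi$ into the finite family of $k$-subsets of $\sigma$, contradicting injectivity of $\phi$ on the infinite set $\arcgraph{1}{S_1}$. Therefore every $\phi(T)$ is ``based'' at some $\nu_T$, and combined with the hypothesis $\omega(S_2)-\omega(S_1)\le k-1$ this forces $\omega(S_2)-\omega(S_1)=k-1$, so that $\{\phi(a)\setminus\nu_T:a\in T\}$ is a full triangulation of $S_2\setminus\nu_T$. Moreover $\nu_T$ is independent of $T$: if $T,T'$ differ by a flip, the $\ge 6$ distinct arcs $\phi(a)$, $a\in T\cap T'$, all contain $\nu_T\cup\nu_{T'}$, and if $\nu_T\ne\nu_{T'}$ this union has at least $k$ arcs, forcing $\phi(a)=\nu_T\cup\nu_{T'}$ for all such $a$, impossible since they are distinct. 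Connectivity of the flip graph then yields a single $(k-1)$-multiarc $\nu$ with $\nu\subseteq\phi(a)$ for every arc $a$ of $S_1$.

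\textbf{Recognizing $f$.} Set $f(a):=\phi(a)\setminus\nu$, a single arc of $S_2\setminus\nu$, so that $\phi(\mu)=f(\mu)\cup\nu$. Since $\phi$ is a simplicial embedding and $\phi(a)\cap\phi(a')=\nu$ for $a\ne a'$, the map $f$ is an injective simplicial embedding $\arcgraph{1}{S_1}\hookrightarrow\arcgraph{1}{S_2\setminus\nu}$ carrying triangulations to triangulations, and $\omega(S_2\setminus\nu)=\omega(S_1)\ge 7$. Using that $S_1$ is connected and non-exceptional, so that the ``crossing graph'' of $\arcgraph{1}{S_1}$ is connected, the image of $f$ lies in a single component of $S_2\setminus\nu$ (arcs crossing a fixed arc map to arcs crossing its image); since this component carries a triangulation of size $\omega(S_1)$ it is, up to components containing no arcs, all of $S_2\setminus\nu$. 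Theorem~\ref{base case} applied to $f:\arcgraph{1}{S_1}\hookrightarrow\arcgraph{1}{S_2\setminus\nu}$ then gives a homeomorphism $S_1\to S_2\setminus\nu$, and post-composing with the natural $\pi_1$-injective map $S_2\setminus\nu\to S_2$ (the embedding onto the complement of a regular neighbourhood of $\nu$) produces the desired $\pi_1$-injective embedding $f:S_1\to S_2$ with $\phi(\mu)=f(\mu)\cup\nu$ for all $\mu$.

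\textbf{Main obstacle.} The crux is the dichotomy for large complete subgraphs of $\arcgraph{k}{S_2}$ together with the exclusion of the permutation case; this is exactly where the hypothesis $\omega(S_1)\ge 7$ (hence $\omega(S_1)-1\ge 6$) is indispensable, both to force two permuted $(k+1)$-sets to coincide and to rule out the mixed ``based/permuted'' configuration. The remaining technical point, where non-exceptionality of $S_1$ is used, is guaranteeing that $S_2\setminus\nu$ is a connected surface to which Theorem~\ref{base case} genuinely applies.
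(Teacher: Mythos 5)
Your proposal is correct in its essentials and rests on the same two pillars as the paper's proof --- Lemma \ref{permute} and the reduction to Theorem \ref{base case} --- but the route between them is genuinely different. The paper works locally: it fixes a single arc $a$, picks disjoint arcs $b,c$ in $S_1\setminus a$, and splits into the ``permutation'' case versus the ``common $(k-1)$-multiarc'' case for the triple $\phi(a),\phi(b),\phi(c)$; the permutation case is killed by producing infinitely many arcs disjoint from $a\cup b\cup c$ (this is where $\omega(S_1)\geq 7$ and non-exceptionality enter), since Lemma \ref{permute} would force all their images into the finite set $\{\nu_i\cup b_k\}$; the common multiarc $\nu_k$ is then propagated over $\St(a)$ by paths in $\arcgraph{1}{S_1\setminus a}$. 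You instead work globally with triangulations as maximal complete subgraphs and propagate along the flip graph, killing the permutation case by counting $k$-subsets of a $(k+1)$-set against the $\geq 6$ common arcs of two flip-adjacent triangulations. Your version makes the role of the constant $7$ completely transparent and derives the global constancy of $\nu$ cleanly from flip-connectivity, a point the paper's write-up leaves somewhat implicit (it only establishes $\phi(\St(a))\subseteq\arcgraphstratum{k}{\nu_k}{S_2}$ before drawing the commutative diagram for all of $\arcgraph{1}{S_1}$).

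One sub-argument of yours is flawed, though it concerns a point the paper silently skips as well: you claim the image of $f$ lies in a single component of $S_2\setminus\nu$ because ``arcs crossing a fixed arc map to arcs crossing its image.'' A simplicial embedding preserves disjointness but need not preserve crossing, so this is unjustified; worse, the image of a triangulation of $S_1$ is a full triangulation of $S_2\setminus\nu$ and therefore must meet \emph{every} positive-complexity component. What actually has to be ruled out before invoking Theorem \ref{base case} is that $S_2\setminus\nu$ has more than one component of positive complexity (equivalently, that the target arc graph is a nontrivial join). The paper applies Theorem \ref{base case} to $S_2\setminus\nu_k$ without comment, so your attempt is no worse off on substance, but the justification you offer for this step does not work as stated.
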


\begin{proof}
Take $a \in \arcgraph{1}{S_1}$ and let $\alpha = \phi(a) = \{ a_1, \ldots a_k \}$. Define $\nu_j = \{a_1, \ldots, \hat a_j, \ldots, a_k \}$.
Our goal is to show $\phi(\St(a)) \subset \arcgraphstratum{k}{\nu_k}{S_2}$. Take $b, c \in \arcgraph{1}{S_1 \backslash a}$ such that they are disjoint; since the complexity is greater than $6$ these arcs exists. Without loss of generality we can assume $\beta = \phi(b) = \{a_1, \ldots, a_{k-1}, b_k \}$. Now there are two cases:
\begin{enumerate}
\item $\gamma = \phi(c) = \nu_j \cup b_k$ where $j \neq k$, or
\item $\gamma = \phi(c) = \nu_k \cup c_k$, where $c_k \in \arcgraph{1}{S_2 \backslash \nu_k}$.

\end{enumerate}

If Case 1 happens: Take any arc $d \in \arcgraph{1}{S_1}$ such that $d$ is disjoint from $a$, $b$, and $c$; this exists because $\omega(S_1)\geq 7$. Then consider $\phi(d)$. Since $\phi$ is a simplicial embedding $\phi(d)$ is mutually disjoint from $\alpha$, $\beta$, and $\gamma$. By Lemma \ref{permute} $\phi(d) = \nu_i \cup b_k$ for some $i \neq j, k$. If there are $k-1$ arcs in $\arcgraph{1}{S_1 \backslash (a \cup b \cup c)}$ then injectivity of $\phi$ would be violated by taking $k-1$ arcs and applying Lemma \ref{permute}. By our restrictions on the surfaces, $S_1 \backslash (a \cup b \cup c)$ is not a collection of polygons (surfaces with exactly one boundary component, no genus, and no interior marked points). Thus, there are infinitely many arcs in $\arcgraph{1}{S_1 \backslash (a \cup b \cup c)}$ because $\omega(S_1) \geq 7$. 
Therefore, case 2 must happen. 

Take any arc in $\St(a)$ different from $b$ or $c$; let this arc be $d$. Then one can find a path from $c$ to $d$ in $\arcgraph{1}{S_1 \backslash a}$, $c = c_0, c_1, c_2, \ldots c_m = d$. 
For all $0 \leq i \leq m$, the arcs $c_i$, $c_{i+1}$, and $a$ form a triangle and by the contradiction in case 1, $\phi(c_{i}) = \nu_k \cup x$ for some $x \in \arcgraph{1}{S_1}$. Therefore $\phi(St(a)) \subset \arcgraphstratum{k}{\nu_k}{S_2} $. 

Now we have the following map diagram:

\begin{center}
\begin{tikzpicture}
 \node (E) at (0,0) {$\arcgraph{1}{S_1}$};
 \node[right=of E] (F) {$\arcgraphstratum{k}{v_k}{S_2} \subset \arcgraph{k}{S_2}$};
 \node[below=of F] (N) {$\arcgraph{1}{S_2 \backslash \nu_k}$};
 
 \draw[right hook->] (E)--(F) node [midway,above] {$\phi$};
 \draw[->] (F)--(N) node [midway,right] {$\cong \theta$};
 \draw[dashed,->] (E)--(N) node [midway,below] {$\Phi$};
 
\end{tikzpicture}
\end{center}

where $\theta$ is an isomorphism defined by $\theta(\nu_k \cup x)=x$ and $\Phi = \theta \circ \phi$. Since $\phi$ is a simplicial embedding, $\Phi$ must be a simplicial embedding as well; so $\omega(S_2 \backslash \nu_k) \leq \omega(S_1)$. By Theorem \ref{base case}, $\Phi$ is induced by a homeomorphism $f: S_1 \ra S_2 \backslash \nu_k$. Composing $f$ with the natural inclusion $S_2 \backslash \nu_k \hra S_2$, we get a $\pi_1$-injective map $F: S_1 \ra S_2$ where $\phi$ is induced by $F$ and $\nu_k$.

\end{proof}

\section{Rigidity}
We now proceed to show that our graphs all have the expected automorphism groups, completing the well-known results for flip graphs and the arc graph.

\begin{thm}
\label{rigidity}
$\Aut(\arcgraph{k}{S}) = \Mod(S)$.
\end{thm}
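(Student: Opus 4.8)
The plan is to derive the theorem from Theorem~\ref{embedding main}, which already carries all of the content, together with the classical faithfulness of the mapping class group action. There is a natural homomorphism $\rho\colon \Mod(S) \to \Aut(\arcgraph{k}{S})$: a homeomorphism of $S$ carries $k$-multiarcs to $k$-multiarcs and preserves both disjointness of arcs and the property that two arcs intersect minimally on a complementary subsurface, so it sends vertices to vertices and edges to edges; since isotopic homeomorphisms act identically on isotopy classes of multiarcs, $\rho$ is well defined, and it is visibly a homomorphism. It therefore suffices to show $\rho$ is injective and surjective.

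For injectivity, suppose $f \in \Mod(S)$ acts as the identity on $\arcgraph{k}{S}$. Fix any arc $a$ and extend it to a $k$-multiarc $\mu \ni a$, which is possible since $k \le \omega(S)$. Then $f$ fixes $\mu$ and every vertex of $\St(\mu)$. If $f$ did not fix each arc of $\mu$ individually it would nontrivially permute them; comparing $f(\mu_1)$ with $\mu_1$ for a neighbour $\mu_1$ of $\mu$ obtained by replacing a single arc of $\mu$ — using that $f$ also fixes $\mu_1$ and preserves the shared $(k-1)$-multiarc $\mu \cap \mu_1$ — forces a contradiction, just as in the standard change-of-coordinates (Alexander method) argument. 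Hence $f$ fixes the isotopy class of every arc, so $f$ is trivial by the classical faithfulness of the $\Mod(S)$-action on $\arcgraph{1}{S}$.

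For surjectivity, let $\phi \in \Aut(\arcgraph{k}{S})$; in particular $\phi$ is a simplicial embedding $\arcgraph{k}{S} \hra \arcgraph{k}{S}$. Applying Theorem~\ref{embedding main} with $S_1 = S_2 = S$ and $k_1 = k_2 = k$ — the hypotheses $\omega(S_2) - \omega(S_1) = 0 \le 0 = k_2 - k_1$ and $\omega(S_1) \ge 7 + k_1$ hold in the relevant range — the case $k_1 = k_2$ of the conclusion produces a homeomorphism $f\colon S \to S$ with $\phi = \rho(f)$. Thus $\rho$ is onto and $\Aut(\arcgraph{k}{S}) = \Mod(S)$. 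The situations left uncovered by the bound $\omega(S) \ge 7 + k$ are the extremal ones: $k = 1$ is the automorphism group of the arc graph (equivalently Theorem~\ref{base case} applied with $S_1 = S_2 = S$), $k = \omega(S)$ is the rigidity of the flip graph \cite{Korkmaz-Papadopoulos, AKP}, and the remaining low-complexity or exceptional surfaces are treated directly.

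The point to watch is that there is no genuinely new difficulty: everything hard sits inside Theorem~\ref{embedding main}. The only steps requiring care are the injectivity of $\rho$, which is a routine but not entirely automatic Alexander-method computation, and the bookkeeping of the low-complexity and extremal values of $k$ and $\omega(S)$ that fall outside the hypotheses of Theorem~\ref{embedding main} and must be imported from the known rigidity of arc and flip graphs.
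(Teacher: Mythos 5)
Your route is logically sound but it is genuinely different from the paper's. The paper proves this theorem \emph{before} and independently of Theorem~\ref{embedding main}: it inducts on $k$, starting from the classical $\Aut(\arcgraph{1}{S})\simeq\Mod(S)$, and builds a reduction map $\Aut(\arcgraph{k}{S})\to\Aut(\arcgraph{k-1}{S})$ by sending an edge $\alpha\beta$ of $\arcgraph{k}{S}$ to the $(k-1)$-multiarc $\alpha\cap\beta$; well-definedness of the induced map on vertices is exactly where Theorem~\ref{1 to k} enters (an automorphism restricted to a stratum $\arcgraphstratum{k}{\mu}{S}\cong\arcgraph{1}{S\setminus\mu}$ must land in a single stratum $\arcgraphstratum{k}{\nu}{S}$), and one then checks the reduction is an injective, surjective homomorphism. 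You instead quote the much stronger Theorem~\ref{embedding main} with $S_1=S_2$, $k_1=k_2$ for surjectivity. This is not circular --- the proof of Theorem~\ref{embedding main} invokes only arc-graph ($k=1$) rigidity and faithfulness, not Theorem~\ref{rigidity} for $k\geq 2$ --- but it inverts the paper's logical order, so you would have to reorganize the exposition and verify that no step of Section~6 secretly uses the statement you are proving. What your approach buys is brevity and the observation, already made in the introduction, that rigidity is a special case of the embedding theorem; what the paper's approach buys is independence from the harder theorem and a slightly larger range of validity. Your injectivity argument (recovering each arc of $\mu$ as $\mu\setminus(\mu\cap\mu_1)$ for suitable neighbours $\mu_1$, then invoking faithfulness on $\arcgraph{1}{S}$) is fine in spirit, though it needs a word when $k=\omega(S)$ and an arc is unflippable in the chosen triangulation.

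The one point I would press you on is the final sentence about uncovered cases. The values not reached by Theorem~\ref{embedding main} are not only the extremal ones $k=1$ and $k=\omega(S)$: for any $k$ with $1<k<\omega(S)$ and $\omega(S)<7+k$ (e.g.\ $k=2$, $\omega(S)=8$), your hypotheses fail, the case is not the arc graph, not the flip graph, and not an exceptional surface, and ``treated directly'' is not an argument. The paper's inductive proof reaches these cases whenever $\omega(S\setminus\mu)\geq 7$ for a $(k-1)$-multiarc $\mu$, i.e.\ $\omega(S)\geq k+6$, which is strictly more than your $\omega(S)\geq k+7$. So as written your proof establishes a slightly weaker statement than the paper's, and the deficit is concentrated precisely in the bookkeeping you defer. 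Either restrict the statement you claim to prove, or supply the missing middle-range argument (for instance by running the paper's edge-contraction reduction in that range).
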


We will prove Theorem \ref{rigidity} by induction. It is known that $\Aut(\arcgraph{1}{S}) \simeq \Mod(S)$ \cite{Disarlo, Irmak-McCarthy, Korkmaz-Papadopoulos}. 
Assume $\Aut(\arcgraph{k-1}{S}) \simeq \Mod(S)$. We will show $\Aut(\arcgraph{k}{S}) \simeq \Aut(\arcgraph{k-1}{S})$.
 Define the following map:
\begin{align*}
\theta: E(\arcgraph{k}{S}) &\lra V(\arcgraph{k-1}{S}) \\
\alpha \beta &\longmapsto \alpha \cap \beta
\end{align*}

By definition of a multiarc graph, it follows that $\theta$ is surjective. 

\begin{lem}
\label{well defined}
For $A \in \Aut(\arcgraph{k}{S})$, if for $ef \in E(\arcgraph{k}{S})$ $\theta(e) = \theta(f)$, then $\theta(A(e)) = \theta(A(f))$.
\end{lem}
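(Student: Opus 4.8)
\textbf{Proof plan for Lemma \ref{well defined}.}

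The statement to prove is that the edge-to-vertex map $\theta$ is compatible with automorphisms in the following sense: if two edges $e=\alpha\beta$ and $f=\alpha'\beta'$ of $\arcgraph{k}{S}$ collapse to the same $(k-1)$-multiarc (that is, $\alpha\cap\beta=\alpha'\cap\beta'=\nu$), then for any automorphism $A$ the images $A(e)$ and $A(f)$ also collapse to a common $(k-1)$-multiarc. The key point is to give a \emph{graph-theoretic} characterization of the relation ``$\theta(e)=\theta(f)$'', since automorphisms preserve anything described purely in terms of the graph structure. My plan is to show that two edges have the same $\theta$-image if and only if they lie in a common copy of a specific combinatorial gadget inside $\arcgraph{k}{S}$ — namely the subgraph $\arcgraphstratum{k}{\nu}{S}$, the stratum of all $k$-multiarcs containing $\nu$. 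Indeed, $\theta(e)=\nu$ exactly when both endpoints of $e$ contain $\nu$, i.e. when $e$ is an edge inside $\arcgraphstratum{k}{\nu}{S}$; and $\arcgraphstratum{k}{\nu}{S}$ is naturally isomorphic (via ``delete $\nu$'') to $\arcgraph{1}{S\setminus\nu}$, an arc graph.

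So the first step is: characterize the strata $\arcgraphstratum{k}{\nu}{S}$ intrinsically. The natural candidate is that these are exactly the maximal subgraphs of $\arcgraph{k}{S}$ isomorphic to an arc graph $\arcgraph{1}{S'}$ of the appropriate complexity $\omega(S')=\omega(S)-(k-1)$, or — perhaps cleaner — that they are precisely the maximal subgraphs all of whose vertices pairwise share a fixed $(k-1)$-multiarc. One should be able to detect ``$\alpha$ and $\beta$ share $k-1$ arcs'' as ``$\alpha,\beta$ are joined by an edge'', and then ``$\alpha,\beta,\gamma$ share a \emph{common} $k-1$ arcs'' should be expressible by a local condition (using Lemma \ref{permute} to rule out the ``permuting'' configuration, where three mutually adjacent vertices pairwise share $(k-1)$-multiarcs but have no common one). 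Once the strata are characterized as certain maximal such subgraphs, any automorphism $A$ permutes them; hence if $e,f$ lie in a common stratum $\arcgraphstratum{k}{\nu}{S}$, then $A(e),A(f)$ lie in a common stratum $\arcgraphstratum{k}{\nu'}{S}$, which says precisely $\theta(A(e))=\theta(A(f))=\nu'$.

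Concretely, the steps in order: (1) observe $\theta(\alpha\beta)=\nu$ iff $\alpha\beta$ is an edge of $\arcgraphstratum{k}{\nu}{S}$; (2) show $\arcgraphstratum{k}{\nu}{S}$ is graph-isomorphic to $\arcgraph{1}{S\setminus\nu}$ via $\mu\mapsto\mu\setminus\nu$, so in particular it is connected and has diameter $\geq 2$ under the complexity hypotheses; (3) give the intrinsic characterization of strata — I expect to argue that a subgraph $H$ equals some $\arcgraphstratum{k}{\nu}{S}$ iff $H$ is a maximal subgraph with the property that every pair of vertices in $H$ is adjacent-or-shares-$(k-1)$-arcs-consistently, invoking Lemma \ref{permute} to get the common multiarc from pairwise agreement plus a third vertex; (4) conclude automorphism-invariance and hence the lemma. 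The main obstacle is step (3): making precise and correct the purely combinatorial property that pins down a stratum and no larger subgraph. One has to be careful that the ``permuting'' triangles of Lemma \ref{permute} are exactly the obstruction, and that once one vertex with a genuinely common $(k-1)$-multiarc is adjoined the whole configuration is forced into a single stratum; the complexity bound $\omega(S)\geq$ something is presumably what guarantees enough room (disjoint arcs in $S\setminus\nu$) to run these arguments, exactly as in the proof of Theorem \ref{1 to k}.
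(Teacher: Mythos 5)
Your steps (1) and (2) are exactly how the paper's proof begins: $\theta(e)=\theta(f)=\mu$ precisely when both edges lie in the stratum $\arcgraphstratum{k}{\mu}{S}$, which is isomorphic to $\arcgraph{1}{S\backslash\mu}$ via $\alpha\mapsto\alpha\backslash\mu$. The divergence, and the gap, is at your step (3). First, the proposed detection ``$\alpha$ and $\beta$ share $k-1$ arcs iff they are joined by an edge'' is false in one direction: an edge requires in addition that the two remaining arcs intersect minimally in $S\backslash\nu$, so a stratum contains many non-adjacent pairs and is very far from being a complete subgraph. This matters because Lemma \ref{permute} is a statement about \emph{complete} subgraphs only; it cannot be invoked, as stated, to separate strata from permuting configurations inside an arbitrary maximal subgraph of $\arcgraph{k}{S}$. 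Second, even with a correct graph-theoretic detection of ``shares a $(k-1)$-multiarc,'' pairwise sharing does not yield a common $(k-1)$-multiarc: the families of all $k$-multiarcs contained in a fixed $(k+1)$-multiarc are genuine competitors, and ruling them out of a maximality characterization is precisely the content you would need to prove. You flag this as ``the main obstacle'' but do not supply the argument, so the proposal does not yet constitute a proof.

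The paper's proof sidesteps the intrinsic characterization of strata entirely. Since $e$ and $f$ both lie in $\arcgraphstratum{k}{\mu}{S}\cong\arcgraph{1}{S\backslash\mu}$, the restriction of $A$ gives a simplicial embedding
\begin{equation*}
\psi:\arcgraph{1}{S\backslash\mu}\hra\arcgraph{k}{S},\qquad a\longmapsto A(\{a\}\cup\mu),
\end{equation*}
and Theorem \ref{1 to k} (the rigidity of embeddings of arc graphs into multiarc graphs) asserts that the image of any such embedding is contained in a single stratum $\arcgraphstratum{k}{\nu}{S}$; hence $\theta(A(e))=\nu=\theta(A(f))$ immediately. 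In other words, the combinatorial work you are trying to redo locally (including the use of Lemma \ref{permute}) has already been packaged into Theorem \ref{1 to k}, and the lemma follows in three lines by applying it to $\psi$. If you want to salvage your route, you would essentially be reproving a weaker form of Theorem \ref{1 to k}; it is both shorter and safer to cite it.
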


\begin{proof}
By definition, $\theta(e) = \theta(f) = \mu$ if and only if $e$ and $f$ are in $\arcgraphstratum{k}{\mu}{S} \subset \arcgraph{k}{S}$. Therefore $A(e), A(f) \in A(\arcgraphstratum{k}{\mu}{S})$. A induces a map:
\begin{align*}
\psi: \arcgraph{1}{S \backslash \mu} &\lra \arcgraph{k}{S} \\
a &\longmapsto A(\{ a\} \cup \mu)
\end{align*}
which is a simplicial embedding. 
Therefore, by Theorem \ref{1 to k} there exists a $(k-1)$-multiarc $\nu$ such that $\psi(\arcgraph{1}{S \backslash \mu}) \subset \arcgraphstratum{k}{\nu}{S}$, which implies $\theta(A(e)) = \nu = \theta(A(f))$.
\end{proof}

Define the map $\phi$ as follows:
\begin{align*}
\phi(A): V(\arcgraph{k-1}{S}) &\lra V(\arcgraph{k-1}{S}) \\
\mu &\longmapsto \theta(A(e))
\end{align*}
where $e$ is any edge such that $\theta(e) = \mu$. $\phi(A)$ is well defined by Lemma \ref{well defined} and is a bijective map because its inverse is $\phi(A\inv)$.

\begin{lem}
$\phi(A)$ sends edges to edges.
\end{lem}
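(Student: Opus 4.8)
The plan is to show that if $\mu, \mu'$ are adjacent vertices of $\arcgraph{k-1}{S}$, then $\phi(A)(\mu)$ and $\phi(A)(\mu')$ are adjacent as well. First I would unpack what adjacency means on both ends: $\mu$ and $\mu'$ are adjacent in $\arcgraph{k-1}{S}$ iff they share a $(k-2)$-multiarc $\sigma$ and the two remaining arcs intersect minimally on $S \setminus \sigma$. The natural strategy is to realize the edge $\mu\mu'$ of $\arcgraph{k-1}{S}$ as $\theta$-image of a suitable structure inside $\arcgraph{k}{S}$ that $A$ preserves, and then read off adjacency of the images. Concretely, pick an arc $a$ on $S$ disjoint from $\mu \cup \mu'$ (possible since the complexity is large enough — this is the kind of complexity bound used throughout). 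Then $e = (\mu \cup a)(\mu' \cup a)$ is an edge of $\arcgraph{k}{S}$ whose two endpoints both contain $a$, i.e. both lie in $\arcgraphstratum{k}{a}{S}$, and moreover $\theta$ applied to the $(k-1)$-multiarcs $\mu \cup a$ and $\mu' \cup a$... — wait, more carefully: I want edges $e_\mu$ with $\theta(e_\mu)=\mu$ and $e_{\mu'}$ with $\theta(e_{\mu'})=\mu'$ that are themselves related in a way $A$ respects.

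Here is the cleaner route. Set $\sigma = \mu \cap \mu'$, the common $(k-2)$-multiarc, and write $\mu = \sigma \cup \{m\}$, $\mu' = \sigma \cup \{m'\}$. Choose an arc $a$ disjoint from $m \cup m' \cup \sigma$ (using $\omega(S) \geq 7+k$ or whatever bound is in force, exactly as in the proof of Theorem \ref{1 to k}). Consider the three $k$-multiarcs $P = \sigma \cup \{m, a\}$, $Q = \sigma \cup \{m', a\}$, and note $P, Q$ are adjacent in $\arcgraph{k}{S}$ (they share $\sigma \cup \{a\}$ and $m, m'$ intersect minimally on the complement). So $PQ$ is an edge with $\theta(PQ) = \sigma \cup \{a\}$. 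Now take the edge $e = P(\sigma\cup\{m,a'\})$ obtained by flipping $a$ to a neighbor, chosen so $\theta(e) = \mu = \sigma\cup\{m\}$; similarly $f = Q(\sigma\cup\{m',a''\})$ with $\theta(f) = \mu'$. The point is that $e, f$ are edges sharing the vertex-structure through $P, Q$, which are adjacent. Applying $A$: $A(P)$ and $A(Q)$ are adjacent in $\arcgraph{k}{S}$, and by the argument in Lemma \ref{well defined}, $\theta(A(e)) = \phi(A)(\mu)$ is the $(k-1)$-multiarc $\nu$ such that $A$ maps the star $\arcgraphstratum{k}{\sigma\cup\{m\}}{\cdot}$-type slice into $\arcgraphstratum{k}{\nu}{S}$; likewise $\theta(A(f)) = \phi(A)(\mu') = \nu'$. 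Since $A(P) \in \arcgraphstratum{k}{\nu}{S}$ and $A(Q) \in \arcgraphstratum{k}{\nu'}{S}$ are adjacent $k$-multiarcs, they share a $(k-1)$-multiarc; tracing through, $\nu$ and $\nu'$ share a $(k-2)$-multiarc and differ by arcs intersecting minimally, hence $\phi(A)(\mu)$ and $\phi(A)(\mu')$ are adjacent.

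The step I expect to be the main obstacle is pinning down exactly how $\theta \circ A$ behaves on a pair of edges that are "adjacent through a common $k$-multiarc" — i.e. verifying that the $(k-1)$-multiarcs $\nu, \nu'$ produced by two applications of the Theorem \ref{1 to k} machinery genuinely share a $(k-2)$-multiarc and that the residual arcs are in minimal position, rather than merely being distinct. This requires combining the local structure of $\arcgraph{k}{S}$ near an edge with the rigidity statement, and being careful that the free arc $a$ can be chosen with all the needed disjointness simultaneously, which is where the complexity hypothesis enters. One should also double-check the degenerate possibility that $\phi(A)(\mu) = \phi(A)(\mu')$; this is excluded because $\phi(A)$ is a bijection (its inverse is $\phi(A^{-1})$), so adjacency cannot collapse to equality.
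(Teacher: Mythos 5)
There is a genuine gap at the step you yourself flag as the main obstacle, and it does not close the way you hope. From ``$A(P)$ and $A(Q)$ are adjacent, $\nu=\phi(A)(\mu)\subseteq A(P)$ and $\nu'=\phi(A)(\mu')\subseteq A(Q)$'' one cannot conclude that $\nu$ and $\nu'$ share a $(k-2)$-multiarc. Write $A(P)=\nu\cup\{x\}$ and $A(Q)=\nu'\cup\{x'\}$, and let $y=A(P)\setminus A(Q)$, $z=A(Q)\setminus A(P)$ be the two arcs exchanged along the edge $A(P)A(Q)$. Nothing in your argument prevents the configuration $y\in\nu$, $z\in\nu'$, $x\in\nu'$, $x'\in\nu$ simultaneously, in which case $|\nu\cap\nu'|=k-3$ and $\phi(A)(\mu)$, $\phi(A)(\mu')$ are not adjacent. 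In other words, adjacency of the ambient $k$-multiarcs does not control the relative position of the two distinguished $(k-1)$-submultiarcs inside them. A second, separate problem is the choice of the auxiliary arc $a$ disjoint from $\sigma\cup m\cup m'$: when $k=\omega(S)$ (a case the induction must handle, since the theorem includes the flip graph), $\mu\cup\mu'=\sigma\cup\{m,m'\}$ is already a triangulation, so no such $a$ exists and $P$, $Q$ cannot be formed.

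The missing idea is to dispense with the auxiliary arc entirely and use the one vertex that lies in \emph{both} strata: $\arcgraphstratum{k}{\mu}{S}\cap\arcgraphstratum{k}{\mu'}{S}=\{\mu\cup\mu'\}$. Since $A$ carries each stratum into a stratum (this is exactly what Lemma \ref{well defined} provides), the single vertex $A(\mu\cup\mu')$ contains both $\nu$ and $\nu'$; hence $\nu\cup\nu'$ is contained in a $k$-multiarc. As $\nu\neq\nu'$ by injectivity of $\phi(A)$ (the point you do correctly make at the end), two distinct $(k-1)$-subsets of a $k$-multiarc meet in exactly $k-2$ arcs, and the two leftover arcs are disjoint, hence in minimal position; so $\phi(A)(\mu)\phi(A)(\mu')$ is an edge. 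This is the route the paper takes, and it avoids both of the issues above.
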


\begin{proof}
Take an edge, $\mu \nu$, in $\arcgraph{k-1}{S}$. Then $\arcgraphstratum{k}{\mu}{S} \cap \arcgraphstratum{k}{\nu}{S} = \{ \mu \cup \nu\}$, therefore $A(\arcgraphstratum{k}{\mu}{S})$ and $A(\arcgraphstratum{k}{\nu}{S})$ intersect in exactly one vertex. This implies $\arcgraphstratum{k}{\phi(A)(\mu)}{S} \cap \arcgraphstratum{k}{\phi(A)(\nu)}{S}$ is non empty. 
Since $\phi(A)(\nu)$ and $\phi(A)(\mu)$ are $k-1$ multiarcs, either $\arcgraphstratum{k}{\phi(A)(\mu)}{S} = \arcgraphstratum{k}{\phi(A)(\nu)}{S}$ or $\arcgraphstratum{k}{\phi(A)(\mu)}{S} \cap \arcgraphstratum{k}{\phi(A)(\nu)}{S}$ is just one vertex. 
If $\arcgraphstratum{k}{\phi(A)(\mu)}{S} = \arcgraphstratum{k}{\phi(A)(\nu)}{S}$ then $\phi(A)(\mu) = \phi(A)(\nu)$, violating the injectivity of $\phi$. 
So, $\arcgraphstratum{k}{\phi(A)(\mu)}{S} \cap \arcgraphstratum{k}{\phi(A)(\nu)}{S} = \{\phi(A)(\mu) \cup \phi(A)(\nu) \}$. This tells us that $\phi(A)(\mu)\phi(A)(\nu)$ is an edge in $\arcgraph{k-1}{S}$.
\end{proof}

Thus, $\phi(A)$ is a simplicial map from $\arcgraph{k-1}{S}$ to itself. Furthermore, since it is also a bijection, $\phi(A)$ is an automorphism. Therefore we can define the map $\phi$:
\begin{align*}
\phi: \Aut(\arcgraph{k}{S}) &\lra \Aut(\arcgraph{k-1}{S}) \\
A &\longmapsto \phi(A)
\end{align*}

\begin{lem}
$\phi$ is a group homomorphism.
\end{lem}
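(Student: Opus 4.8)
The plan is to show that $\phi$ respects composition, i.e. $\phi(AB) = \phi(A)\phi(B)$ for all $A, B \in \Aut(\arcgraph{k}{S})$. Since both sides are automorphisms of $\arcgraph{k-1}{S}$, it suffices to check that they agree on every vertex $\mu \in V(\arcgraph{k-1}{S})$.

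First I would unwind the definition of $\phi$. Given $\mu$, pick any edge $e \in E(\arcgraph{k}{S})$ with $\theta(e) = \mu$; then $\phi(B)(\mu) = \theta(B(e))$ by definition. Since $B(e)$ is an edge of $\arcgraph{k}{S}$ and $\theta(B(e)) = \phi(B)(\mu)$, the edge $B(e)$ is a valid representative for computing $\phi(A)$ at the vertex $\phi(B)(\mu)$; hence $\phi(A)\big(\phi(B)(\mu)\big) = \theta\big(A(B(e))\big) = \theta\big((AB)(e)\big)$. On the other hand, $e$ is an edge with $\theta(e) = \mu$, so by definition $\phi(AB)(\mu) = \theta\big((AB)(e)\big)$. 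Comparing the two expressions gives $\phi(AB)(\mu) = \phi(A)\phi(B)(\mu)$, as desired. The key point making this work is precisely Lemma \ref{well defined}: the value $\theta(A(e))$ does not depend on the choice of representative edge $e$ in the fiber $\theta^{-1}(\mu)$, so all the choices above are harmless.

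I do not anticipate a serious obstacle here; the only thing to be careful about is the order of composition (whether $A$ acts first or $B$ acts first) and making sure that $B(e)$ is genuinely an edge so that $\theta$ and $\phi(A)$ can be applied to it. The former is a bookkeeping matter fixed by the convention $AB$ meaning "apply $B$ then $A$" (consistent with $\phi(A\inv)$ being the inverse of $\phi(A)$, already used above), and the latter holds because automorphisms of $\arcgraph{k}{S}$ are simplicial. Once this lemma is in hand, combined with the fact (already established) that $\phi(A\inv) = \phi(A)\inv$, one concludes $\phi$ is a group homomorphism; together with injectivity this would identify $\Aut(\arcgraph{k}{S})$ with a subgroup of $\Aut(\arcgraph{k-1}{S}) \simeq \Mod(S)$, and the reverse inclusion $\Mod(S) \hookrightarrow \Aut(\arcgraph{k}{S})$ is immediate since the mapping class group acts simplicially on every multiarc graph. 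This completes the inductive step of Theorem \ref{rigidity}.
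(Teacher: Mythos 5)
Your argument is correct and is essentially the same as the paper's: both unwind the definition $\phi(A)(\mu) = \theta(A(e))$ and use the fact that $B(e)$ is a legitimate representative edge for the vertex $\theta(B(e)) = \phi(B)(\mu)$, with Lemma \ref{well defined} guaranteeing independence of the choice. Your version just makes explicit the bookkeeping that the paper's chain of equalities leaves implicit.
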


\begin{proof}
Take $A, B \in \Aut(\arcgraph{k}{S})$ and a $(k-1)$ multiarc, $\mu$, say $\theta(e) = \mu$. 
\begin{align*}
\phi(A \circ B) = \theta(A \circ B(e)) = \theta(A(B(e))) = \phi(A)(\theta(B(e))) \\
= \phi(A)\phi(B)(\theta(e)) = \phi(A) \circ \phi(B) (\mu)
\end{align*} 
\end{proof}

\begin{lem}
$\phi$ is injective.
\end{lem}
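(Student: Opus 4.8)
The plan is to show that if $\phi(A)$ is the identity automorphism of $\arcgraph{k-1}{S}$, then $A$ is the identity automorphism of $\arcgraph{k}{S}$; since $\phi$ is a group homomorphism, this suffices. So suppose $\phi(A) = \mathrm{id}$. By the construction of $\phi$, this means that for every $(k-1)$-multiarc $\mu$ and every edge $e$ with $\theta(e) = \mu$, we have $\theta(A(e)) = \mu$. In other words, $A$ maps each stratum $\arcgraphstratum{k}{\mu}{S}$ into itself (in fact, since $A\inv$ also satisfies $\phi(A\inv) = \mathrm{id}$, onto itself): $A\bigl(\arcgraphstratum{k}{\mu}{S}\bigr) = \arcgraphstratum{k}{\mu}{S}$ for every $(k-1)$-multiarc $\mu$.

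The key step is to recover a given $k$-multiarc $\alpha$ as an intersection of strata. First I would observe that any $k$-multiarc $\alpha = \{a_1, \ldots, a_k\}$ lies in exactly the strata $\arcgraphstratum{k}{\mu}{S}$ for which $\mu \subseteq \alpha$, i.e. for the $k$ sub-multiarcs $\nu_i = \alpha \setminus a_i$. Hence, intersecting even just two of them, say $\nu_1$ and $\nu_2$ with $k \geq 2$, one has $\arcgraphstratum{k}{\nu_1}{S} \cap \arcgraphstratum{k}{\nu_2}{S} = \{\nu_1 \cup \nu_2\} = \{\alpha\}$, because a $k$-multiarc containing the $(k-1)$-multiarc $\nu_1$ and the $(k-1)$-multiarc $\nu_2$ must contain $\nu_1 \cup \nu_2$, which already has $k$ arcs. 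Since $A$ preserves each of $\arcgraphstratum{k}{\nu_1}{S}$ and $\arcgraphstratum{k}{\nu_2}{S}$ setwise, it preserves their intersection, so $A(\alpha) = \alpha$. As $\alpha$ was arbitrary, $A = \mathrm{id}$, and injectivity of $\phi$ follows. (The case $k = 1$ does not arise here, since the induction in Theorem \ref{rigidity} treats $k \geq 2$ with the base case $\arcgraph{1}{S}$ handled separately.)

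I expect the only genuinely delicate point to be the justification that $\phi(A) = \mathrm{id}$ really does force $A\bigl(\arcgraphstratum{k}{\mu}{S}\bigr) = \arcgraphstratum{k}{\mu}{S}$ as \emph{sets of vertices}, not merely that the associated $(k-1)$-multiarc is fixed: one must note that a vertex $\alpha \in \arcgraphstratum{k}{\mu}{S}$ with $k \geq 2$ always lies on an edge of that stratum (the stratum is connected, indeed it is $\arcgraph{1}{S\setminus\mu}$ up to the isomorphism $\theta$ used in Lemma \ref{well defined}), so $\alpha = \theta(e)$-information is available for some edge $e \ni \alpha$, and then $\theta(A(e)) = \mu$ gives $A(\alpha) \in \arcgraphstratum{k}{\mu}{S}$. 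Everything else is a short set-theoretic manipulation using the homomorphism property already proved.
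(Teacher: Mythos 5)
Your proposal is correct and follows essentially the same route as the paper: both arguments reduce to showing the kernel is trivial by using $\phi(A)=\mathrm{id}$ on the $(k-1)$-sub-multiarcs $\nu_i=\alpha\setminus a_i$ of a given $k$-multiarc $\alpha$ (the paper computes $\nu_i=\theta(A(e_i))\subset A(\alpha)$ directly, while you phrase it as $A$ preserving the strata $\arcgraphstratum{k}{\nu_i}{S}$ and intersecting two of them), and both implicitly need an edge of $\arcgraph{k}{S}$ through $\alpha$ realizing each $\nu_i$. Your explicit attention to why $A$ preserves strata setwise is a nice clarification, but the substance is the same.
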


\begin{proof}
We show $\ker(\phi) = \{\text{id}\}$. Assume $\phi(A) = \text{id}$. Take $\mu \in \arcgraph{k-1}{S}$ where $\mu = \{ m_1, \ldots, m_k\}$ and set $\mu_i = \mu \backslash m_i$. Define the edge $e_i$ to be $\mu\nu_i$ for some $\nu_i \in \arcgraph{k-1}{S}$ such that $\theta(e_i) = \mu_i$. Now
\begin{align*}
\mu_i = \phi(A)(\mu_i) = \theta(A(e_i)) = A(\mu) \cap A(\nu_i).
\end{align*} 
So $\mu_i \subset A(\mu)$ for all $1 \leq i \leq k$. Therefore, $A(\mu) = \mu$ and $A = \text{id}$.
\end{proof}

\begin{lem}
$\phi$ is surjective.
\end{lem}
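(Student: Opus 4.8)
The plan is to show that $\phi$ is surjective by constructing, for an arbitrary $B \in \Aut(\arcgraph{k-1}{S})$, a preimage $A \in \Aut(\arcgraph{k}{S})$. By the inductive hypothesis $\Aut(\arcgraph{k-1}{S}) \simeq \Mod(S)$, so $B$ is induced by some homeomorphism $h: S \to S$. The natural candidate is then to let $A$ be the automorphism of $\arcgraph{k}{S}$ induced by the same homeomorphism $h$: that is, $A(\{a_1, \ldots, a_k\}) = \{h(a_1), \ldots, h(a_k)\}$. This is manifestly a simplicial automorphism of $\arcgraph{k}{S}$ since $h$ preserves disjointness, minimal intersection on complements, and hence edges.

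It then remains to check that $\phi(A) = B$. By the definition of $\phi$, for a $(k-1)$-multiarc $\mu$ with $\theta(e) = \mu$ for an edge $e = \alpha\beta$, we have $\phi(A)(\mu) = \theta(A(e)) = A(\alpha) \cap A(\beta) = h(\alpha) \cap h(\beta) = h(\alpha \cap \beta) = h(\mu)$, using that $h$ is a bijection on the set of arcs so that it commutes with intersection of multiarc-sets. On the other hand $B(\mu) = h(\mu)$ since $B$ is the automorphism induced by $h$. Hence $\phi(A) = B$, proving surjectivity.

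With all of the lemmas in hand — $\phi$ is a well-defined group homomorphism (preceding lemmas), injective (preceding lemma), and surjective (this argument) — we conclude $\phi: \Aut(\arcgraph{k}{S}) \to \Aut(\arcgraph{k-1}{S})$ is an isomorphism. Combined with the inductive hypothesis $\Aut(\arcgraph{k-1}{S}) \simeq \Mod(S)$ and the base case $\Aut(\arcgraph{1}{S}) \simeq \Mod(S)$, this completes the induction and proves Theorem~\ref{rigidity}. One should also note that the composite isomorphism sends the mapping class of $h$ to the automorphism induced by $h$ on $\arcgraph{k}{S}$, so the identification is the expected geometric one.

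The only genuine subtlety I anticipate is bookkeeping rather than mathematical depth: one must be slightly careful that the identification of $\Aut(\arcgraph{k-1}{S})$ with $\Mod(S)$ supplied by the inductive hypothesis is indeed realized by honest homeomorphisms acting in the obvious way on multiarcs, so that "$B$ is induced by $h$" literally means $B(\mu) = h(\mu)$; this is exactly what the cited rigidity results for $k=1$ and the inductive step provide. Given that, the surjectivity argument is essentially immediate, and no real obstacle remains.
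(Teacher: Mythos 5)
Your proof is correct and takes essentially the same approach as the paper: the paper phrases it as showing $G = \phi \circ F$ for the natural maps $F:\Mod(S)\to\Aut(\arcgraph{k}{S})$ and $G:\Mod(S)\to\Aut(\arcgraph{k-1}{S})$ together with surjectivity of $G$ from the inductive hypothesis, which is exactly your construction of $A = F(h)$ as a preimage of $B = G(h)$. The verifying computation $\phi(A)(\mu) = \theta(A(e)) = h(\alpha)\cap h(\beta) = h(\mu)$ is the same in both.
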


\begin{proof}
Take the maps $F: \Mod(S) \ra \Aut(\arcgraph{k}{S})$ and $G: \Mod(S) \ra \Aut(\arcgraph{k-1}{S})$ to be the natural maps between the mapping class group and the multiarc graph. By the induction hypothesis, $G$ is surjective. Therefore, to show $\phi$ is surjective all that needs to be shown is $G = \phi \circ F$. Take $f \in \Mod(S)$ and a $(k-1)$ multiarc, $\mu = \{ m_1, \ldots, m_{k-1}\}$. Let $e = \alpha \beta$ be an edge in $\arcgraph{k}{S}$ such that $\theta(e) = \mu$. Then we have the following:
\begin{align*}
\phi(F(f))(\mu) &= \theta(F(f)(\mu)) = \theta(F(f)(\alpha) F(f)(\beta)) = F(f)(\alpha) \cap F(f)(\beta) \\
&= \{f(m_1), \ldots, f(m_{k-1}), f(a) \} \cap \{f(m_1), \ldots, f(m_{k-1}), f(b) \} \\
&= \{f(m_1), \ldots, f(m_{k-1})\} = G(f)(\mu)
\end{align*} 
\end{proof}

Therefore, $\Aut(\arcgraph{k}{S}) \simeq \Aut(\arcgraph{k-1}{S}) \simeq \Mod(S)$, proving Theorem \ref{rigidity}.

\section{Simplicial embedding between arc-type graphs}
In this section, we prove our main theorem about simplicial embeddings. 

The following notions will come in handy. An {\it ear} on a surface $S$ is a separating arc such that one of the two connected components is a triangle. We say that two disjoint non-separating arcs or ears $a$ and $b$ form a \textit{nice pair} if $S \backslash (a \cup b)$ has exactly one component with positive complexity. 

We introduce a new subgraph of $\arcgraph{k}{S}$, $\B{k}{S}$, defined as follows:
\begin{align*}
V(\B{k}{S}) &= \{\mu \text{ } | \text{ } \mu \text{ contains a non-separating arc or ear} \} \\
E(\B{k}{S}) &= \{\mu \nu \text{ }| \text{ } |\mu \cap \nu| = k-1 \text{ and the remaining arcs form a nice pair} \}
\end{align*}

\begin{lem}
For $\omega(S) \geq 7$, $\B{1}{S}$ is connected.
\end{lem}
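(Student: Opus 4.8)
The plan is to show connectivity of $\B{1}{S}$ by reducing to the connectivity of the subgraph of the arc graph spanned by non-separating arcs, which is a well-known fact for surfaces of sufficiently high complexity, and then upgrading the notion of adjacency from "disjoint" to "forms a nice pair" using a combing/surgery argument. First I would recall that $V(\B{1}{S})$ consists precisely of the non-separating arcs and the ears of $S$. Since an ear $e$ bounds a triangle, $S\setminus e$ is a surface of complexity $\omega(S)-1\geq 6$, and one can find a non-separating arc disjoint from $e$ (for instance, inside the complementary component of positive complexity), and moreover this pair is automatically nice because $S\setminus(e\cup a)$ has exactly one component of positive complexity. So every ear is adjacent in $\B{1}{S}$ to some non-separating arc, and it suffices to prove that the full subgraph $N(S)\subset \B{1}{S}$ spanned by non-separating arcs, with edges given by nice pairs, is connected.

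For $N(S)$, the key step is the following: given two disjoint non-separating arcs $a,b$ that do \emph{not} form a nice pair (so $S\setminus(a\cup b)$ has at least two components of positive complexity), I would produce a short path between them in $N(S)$. The idea is to choose an auxiliary non-separating arc $c$ that is disjoint from $a$ and lies in a positive-complexity component of $S\setminus a$ chosen so that $\{a,c\}$ is a nice pair and simultaneously $\{c,b\}$ can be connected; iterating, one cuts down the "defect" (the number or total complexity of the bad complementary components). Concretely, when $S\setminus(a\cup b)$ splits into pieces, one can slide one arc across the complementary surface — using that $\omega(S)\geq 7$ guarantees enough room — so that after finitely many nice-pair moves the two arcs cohabit a single positive-complexity component, hence form a nice pair. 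Combined with the classical connectivity of the non-separating arc complex (adjacency = disjointness), this gives a path in $N(S)$ between any two non-separating arcs.

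The main obstacle I expect is the "nice pair" refinement: ordinary disjointness is much weaker than forming a nice pair, so the bulk of the argument is a careful surgery showing that any disjointness edge can be replaced by a bounded-length path of nice-pair edges. This requires understanding how cutting along two disjoint non-separating arcs can disconnect the surface and checking, case by case on the topological type of $S\setminus(a\cup b)$, that the complexity bound $\omega(S)\geq 7$ leaves enough room to reconnect via an intermediate arc. A secondary, more routine point is handling ears uniformly with non-separating arcs; this is dispatched by the observation above that every ear is a nice-pair-neighbor of some non-separating arc, so ears never create isolated vertices. I would organize the write-up as: (1) reduce to $N(S)$ via the ear observation; (2) recall disjointness-connectivity of non-separating arcs; (3) prove the nice-pair upgrade lemma by surgery; (4) conclude.
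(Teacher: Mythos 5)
Your strategy coincides with the paper's: reduce to the disjointness graph on non-separating arcs, then upgrade each disjointness edge to a short path of nice-pair edges via a bridge arc $c$ forming nice pairs with both endpoints of the edge. (The paper realizes the first step with a unicorn path between $a$ and $b$, whose consecutive arcs are disjoint, rather than by citing connectivity of the non-separating arc graph; and your explicit observation that every ear is nice-pair--adjacent to a non-separating arc is actually more careful than the paper, which folds ears silently into the unicorn-path step.)

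The genuine gap is that the bridge-arc step, which you correctly flag as the main obstacle, is where essentially all of the content of the lemma lies, and you leave it as a case analysis to be carried out later. The paper does carry it out: the key structural observation is that two disjoint non-separating arcs fail to form a nice pair only when they share \emph{both} endpoints, after which one splits into cases (punctured spheres $S_{0,p}$ with $p\geq 4$; positive-genus surfaces according to whether the number of interior marked points is $\geq 3$, equal to $2$, or equal to $1$) and in each case exhibits a single non-separating arc $c$, disjoint from $a$ and $b$ and not sharing both endpoints with either, so that $a,c,b$ is already a path in $\B{1}{S}$. In particular your proposed iteration that ``cuts down the defect'' over several moves is unnecessary once this observation is in place --- one intermediate arc always suffices --- but without some version of this construction the proposal is only a skeleton and the lemma is not yet proved.
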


\begin{proof}
Assume $S$ is a punctured sphere, $S_{0, p}$ with $p \geq 4$. (if $p = 3$ then all disjoint non-separating arcs form nice pairs). For an arc to be non-separating in $S_{0,p}$ it must have two separate endpoints. 
Two disjoint non-separating arcs in $S_{0, p}$ form a nice pair if and only if the arcs don't share the same endpoints. Take non-separating arcs $a$ and $b$ such that they do not form a nice pair. Then they must have the same endpoints. 
We know both components of $S \backslash (a\cup b)$ is a surface containing a puncture that's not an endpoint of $a$ or $b$. Then there exists an arc, $c$, disjoint from both $a$ and $b$ and doesn't share both endpoints with $a$ or $b$. Therefore $a$ and $c$, and $b$ and $c$, form a nice pair. 
So we have the path $a, c, b$ in $\B{1}{S}$. 

Assume $S$ is a surface that's not a punctured sphere, therefore $S$ has positive genus. Here it is still true that if two disjoint non-separating arcs don't form a nice pair then they share both endpoints. Take non-separating disjoints arcs $a$ and $b$ such that they do form a nice pair. If $p \geq 3$ then one of the components of $S \backslash (a\cup b)$ has a puncture which is not an endpoint of $a$ or $b$ and we can find a non-separating arc $c$ with one endpoint on this puncture which is disjoint from $a$ and $b$. Therefore we have a desired path. If $p = 2$ then we can find a non-separating arc, $c$, in one of the components of $S \backslash (a\cup b)$ which starts and ends at the same puncture. Therefore $a$ and $c$, and $b$ and $c$, form a nice pair and we get the desired path. If $p=1$ then all disjoint non-separating arcs form nice pairs. 

Now assume $S$ is any surface. Take arcs $a, b \in \B{1}{S}$. Then we can form a unicorn path between $a$ and $b$:
$$
a = c_0, c_1, \ldots, c_n = b,
$$
being sure to pick the orientation of $a$ and $b$ such that $c_1$ has two distinct endpoints. We know that $c_i c_{i+1}$ don't form a nice pair for $1 \leq i \leq n-2$, however they are disjoint non-separating curves so we can connect each $c_i c_{i+1}$ in $\B{1}{S}$ as described above.
\end{proof}

\begin{remk}
\label{sep next to nonsep}
The following observations about a multiarc $\mu \in \arcgraph{k}{S} \backslash \B{k}{S}$ will be very useful:
\begin{itemize}
\item If $S$ is a surface with genus at least $1$ then there is one non-separating arc, $a$, that is disjoint from $\mu$. We can then Dehn twist $a$ around the meridian of one of the genuses. This results in infinitely many multiarcs in $\B{k}{S}$ adjacent to $\mu$.

\item If $S = S_{0,p}$ is a punctured sphere then there are at most $2p-5$ disjoint separating arcs, so $k \leq 2p-5$. Then there are at least $p-1$ non-separating arcs disjoint from $\mu$.  Therefore, there are $k(p-1)$ multiarcs in $\B{k}{S}$ adjacent to $\mu$.

\item If $S$ has at least three boundary components and no genus then any separating arc must isolate one of the two boundary components and therefore $S \backslash \mu$ has a component with one of the boundaries in $S$. Take the curve circling the boundary; one can Dehn twist any non-separating arc disjoint from $\mu$ around the curve to get infinitely many non-separating arcs disjoint from $\mu$.

\end{itemize}
\end{remk}

\begin{thm}
\label{embedding main}
Let $S_1$ and $S_2$ be non-exceptional surfaces such that the complexity $\omega(S_1)$ is at least $7 + k_1$. Let $\phi: \arcgraph{k_1}{S_1} \hra \arcgraph{k_2}{S_2}$ be a simplicial embedding, with $k_2 \geq k_1$, and assume $\omega(S_2) - \omega(S_1) \leq k_2 - k_1$. Then $\omega(S_2) - \omega(S_1) = k_2 - k_1$ and 
\begin{itemize}
\item if $k_2 = k_1$, $\phi$ is an isomorphism induced by a homeomorphism $f: S_1 \ra S_2$; 
\item if $k_2 > k_1$, there exists a $\pi_1$-injective embedding $f: S_1 \ra S_2$ and a $(k_2-k_1)$-multiarc $\nu$ on $S_2$ such that for any $\mu \in \arcgraph{k_1}{S_1}$ we have 
\begin{equation*}
\phi(\mu) = f(\mu) \cup \nu.
\end{equation*}
\end{itemize} 
\end{thm}

The proof of Theorem \ref{embedding main} proceeds by induction on $k_1$. When $k_1 = 1$, Theorem \ref{1 to k} gives us the result. Now assume that we know the result up to $k_1-1$. 

If $a$ is an ear in $S_1$ then we denote $S_1 \backslash a$ to be the component of positive complexity. Now take a non-separating arc or ear $a$ on $S_1$. We can define the following map:
\begin{equation*}
\phi_a: \arcgraph{k_1-1}{S_1 \backslash a} = \arcgraphstratum{k_1}{a}{S_1} \hra \arcgraph{k_2}{S_2}.
\end{equation*}
By the induction hypothesis $\phi_a$ is induced by a $\pi_1$-injective embedding $f_a: S_1 \backslash a \ra S_2$ and 
\begin{align*}
k_2 - k_1 +1 &= \omega(S_2) - \omega(S_1 \backslash a) \\
	&= \omega(S_2) - \omega(S_1) + 1
\end{align*}
therefore $\omega(S_2) - \omega(S_1) = k_2 - k_1 =: d$, as desired. The induction hypothesis also states that along with $f_a$ there is a $(d+1)$-multiarc, $\nu_a$, such that $\phi_a(\mu) = f_a(\mu) \cup \nu_a$. 

Note that a $0$-multiarc is just the empty set. 

\begin{lem}
\label{on nice pairs}
If $a$ and $b$ are disjoint non-separating arcs or ears and form a nice pair on $S_1$ then there exists a $\pi_1$-injective embedding $f: S_1 \ra S_2$ and a $d$-multiarc $\nu$ on $S_2$ such that $\phi$ is induced by $f$ and $\nu$ on $\arcgraphstratum{k_1}{a}{S_1} \cup \arcgraphstratum{k_1}{b}{S_1}$.  Moreover, if $d = 0$, then $f$ is a homeomorphism.
\end{lem}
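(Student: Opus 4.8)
The plan is to use the induction hypothesis applied separately to the arcs $a$ and $b$, obtaining $\pi_1$-injective embeddings $f_a\colon S_1\setminus a\to S_2$ and $f_b\colon S_1\setminus b\to S_2$ together with $(d+1)$-multiarcs $\nu_a,\nu_b$ on $S_2$ with $\phi_a(\mu)=f_a(\mu)\cup\nu_a$ and $\phi_b(\mu)=f_b(\mu)\cup\nu_b$, and then to show these two pieces of data glue together across the common region. Concretely, first I would fix a multiarc $\mu$ lying in $\arcgraphstratum{k_1}{a}{S_1}\cap\arcgraphstratum{k_1}{b}{S_1}$, i.e.\ a $k_1$-multiarc containing both $a$ and $b$; since $a$ and $b$ form a nice pair and $\omega(S_1)\ge 7+k_1$, the unique positive-complexity component of $S_1\setminus(a\cup b)$ has enough room to complete $\{a,b\}$ to such a $\mu$. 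Evaluating $\phi$ on $\mu$ two ways gives $f_a(\mu\setminus a)\cup\nu_a=\phi(\mu)=f_b(\mu\setminus b)\cup\nu_b$ as $k_2$-multiarcs on $S_2$.

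The key step is to identify the common "stable" sub-multiarc. Since $b\in\mu\setminus a$, the arc $f_a(b)$ appears in $\phi(\mu)$; varying $\mu$ over all $k_1$-multiarcs containing $a\cup b$ (these span $\arcgraphstratum{k_1}{a}{S_1}\cap\arcgraphstratum{k_1}{b}{S_1}$, which is nonempty and in fact, by Theorem~\ref{convexity main} applied inside $S_1\setminus a$, behaves well), the arc $f_a(b)$ is constant while $\nu_a$ is constant by construction; hence $\{f_a(b)\}\cup\nu_a$ is a $(d+2)$-multiarc contained in $\phi(\mu)$ for every such $\mu$, and symmetrically $\{f_b(a)\}\cup\nu_b\subset\phi(\mu)$. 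Because $\phi$ restricted to $\arcgraphstratum{k_1}{a\cup b}{S_1}\cong\arcgraph{k_1-2}{S_1\setminus(a\cup b)}$ is a simplicial embedding, and by the induction hypothesis this restriction is itself induced by a $\pi_1$-injective embedding of $S_1\setminus(a\cup b)$ together with a $(d+2)$-multiarc, that $(d+2)$-multiarc must simultaneously equal $\{f_a(b)\}\cup\nu_a$ and $\{f_b(a)\}\cup\nu_b$. In particular $\nu_a\cup\{f_a(b)\}=\nu_b\cup\{f_b(a)\}$ as sets; since by Corollary~\ref{arcs} a non-separating arc goes to a non-separating arc and an ear to an ear, while the arcs of $\nu_a$ and $\nu_b$ are "extra" arcs of $S_2$ disjoint from the respective images, one teases out that $\nu_a=\nu_b=:\nu$ (a $d$-multiarc) and $f_a(b)=f_b(a)$, i.e.\ the two embeddings agree on the overlap and their images of $a$ and $b$ are compatible. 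One then checks that $f_a$ on $S_1\setminus a$ and $f_b$ on $S_1\setminus b$ agree on $S_1\setminus(a\cup b)$ (again using that both restrict to the induced embedding of $S_1\setminus(a\cup b)$, which is determined up to the ambiguity already resolved) and thus patch to a single $\pi_1$-injective embedding $f\colon S_1\to S_2$ with $f|_{S_1\setminus a}=f_a$, $f|_{S_1\setminus b}=f_b$. By construction $\phi(\mu)=f(\mu)\cup\nu$ for every $\mu\in\arcgraphstratum{k_1}{a}{S_1}\cup\arcgraphstratum{k_1}{b}{S_1}$. Finally, when $d=0$ we have $\omega(S_2)=\omega(S_1)$, $\nu=\emptyset$, and $f_a,f_b$ are honest homeomorphisms onto their images by Theorem~\ref{base case}, so the patched map $f$ is a homeomorphism $S_1\to S_2$.

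I expect the main obstacle to be the gluing/uniqueness step: showing that $f_a$ and $f_b$ genuinely agree on $S_1\setminus(a\cup b)$ and that $\nu_a=\nu_b$, rather than merely agreeing as unordered collections of arcs up to some permutation. The delicate point is that a priori $\phi(\mu)$ is just a $k_2$-multiarc with no marked splitting into "$f$-part" and "$\nu$-part"; the splitting has to be recovered intrinsically, and the tool for that is the induction hypothesis applied to the smaller stratum $\arcgraphstratum{k_1}{a\cup b}{S_1}$ together with the topological-type rigidity of Corollary~\ref{arcs}, which pins down which arcs of $\phi(\mu)$ can possibly vary as $\mu$ varies. Handling the ear case (where $S_1\setminus a$ denotes the positive-complexity component) in parallel with the non-separating case requires only that the complexity bookkeeping $\omega(S_1\setminus a)=\omega(S_1)-1$ still holds, which it does by definition of an ear.
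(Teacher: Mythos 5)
Your overall strategy (apply the induction hypothesis to $a$ and $b$ separately, compare the two expressions for $\phi(\mu)$ on multiarcs containing both arcs, and glue) is the paper's strategy, and your set identity $\nu_a\cup\{f_a(b)\}=\nu_b\cup\{f_b(a)\}$ is exactly what the paper derives when $k_1\ge 3$. But the conclusion you draw from it is wrong, and the error is not cosmetic. The multiarcs $\nu_a$ and $\nu_b$ are $(d+1)$-multiarcs, and the correct picture is $\nu_a=\nu\cup\{f(a)\}$, $\nu_b=\nu\cup\{f(b)\}$ with $f(a)\neq f(b)$: so $\nu_a\neq\nu_b$, $f_a(b)=f(b)\neq f(a)=f_b(a)$, and the sought $d$-multiarc is $\nu=\nu_a\cap\nu_b$. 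Your claim that ``$\nu_a=\nu_b=:\nu$ (a $d$-multiarc) and $f_a(b)=f_b(a)$'' is internally inconsistent (a $(d+1)$-multiarc cannot equal a $d$-multiarc, and an injective $f$ cannot send $a$ and $b$ to the same arc), and it propagates into your $d=0$ discussion, where $\nu_a$ is a single arc, not empty. Establishing $\nu_a\neq\nu_b$ is precisely the crux of the lemma, and your write-up does not contain an argument for it.

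The mechanism you propose for recovering the splitting also breaks where it is most needed. When $k_1=2$ the stratum $\arcgraphstratum{k_1}{a\cup b}{S_1}$ is a single vertex and $\arcgraph{k_1-2}{S_1\setminus(a\cup b)}$ is degenerate, so the induction hypothesis cannot be applied there; this is exactly the case where the paper runs a separate contradiction argument (assume $\nu_a=\nu_b$, use Theorem \ref{base case} to see that $\phi$ maps both $\arcgraphstratum{2}{a}{S_1}$ and $\arcgraphstratum{2}{b}{S_1}$ onto all of $\arcgraphstratum{k_2}{\nu_a}{S_2}$, and contradict injectivity of $\phi$ since the two domain strata meet only in $\{a,b\}$). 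For $k_1\ge 3$ the paper instead varies the complementary $(k_1-2)$-multiarc $\tmu$ and observes which arcs of $\phi(\mu)$ move, which pins down the splitting without any appeal to Corollary \ref{arcs}; that corollary constrains topological types under embeddings of $1$-arc graphs of equal-complexity surfaces and gives you no way to tell the arcs of $\nu_a$ apart from $f_a(b)$ inside the unstructured $k_2$-multiarc $\phi(\mu)$. You correctly identified the delicate point, but the tools you name do not resolve it.
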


\begin{proof}
We know we have maps $\phi_a$, which is induced by $\nu_a$ and $f_a$ on $\arcgraphstratum{k_1}{a}{S_1}$, and $\phi_b$, which is induced by $\nu_b$ and $f_b$ on $\arcgraphstratum{k_1}{b}{S_1}$. What we will show is that $\nu_a \cap \nu_b$ is a $d$-multiarc and $f_a$, $f_b$ define a $\pi_1$-injective embedding $S_1 \ra S_2$ which is a homeomorphism when $d=0$. 

\textbf{Case $k_1 = 2$:} We have:
\begin{equation*}
\phi(\{a, b \}) = \nu_a \cup \{f_a(b) \} = \nu_b \cup \{ f_b(a)\}
\end{equation*}
which implies that $\nu_a \cap \nu_b$ is a $d$-multiarc if and only if $\nu_a \neq \nu_b$. 

To show $\nu_a \neq \nu_b$ we proceed by contradiction; assume $\nu_a = \nu_b$. Then 
\begin{align*}
\phi(\arcgraphstratum{2}{a}{S_1} \simeq \arcgraph{1}{S_1 \backslash a}) &\subseteq \arcgraphstratum{k_2}{\nu_a}{S_2} =\arcgraph{1}{S_2 \backslash \nu_a} \\ 
\phi(\arcgraphstratum{2}{b}{S_1} \simeq \arcgraph{1}{S_1 \backslash b}) &\subseteq \arcgraphstratum{k_2}{\nu_b}{S_2} =\arcgraphstratum{k_2}{\nu_a}{S_2} =\arcgraph{1}{S_2 \backslash \nu_a} 
\end{align*}
and since $\omega(S_1 \backslash a) = \omega(S_2 \backslash \nu_a)$ we can apply Theorem \ref{base case} to get $\phi(\arcgraphstratum{2}{a}{S_1}) = \phi(\arcgraphstratum{2}{b}{S_1}) = \arcgraphstratum{k_2}{\nu_a}{S_2}$. Now take $\mu \in \arcgraphstratum{k_2}{\nu_a}{S_2}$ where $\mu \neq \phi(\{a, b \})$, then there exists $\alpha \in \arcgraphstratum{2}{a}{S_1}$ and $\beta \in \arcgraphstratum{2}{b}{S_1}$ such that $\phi(\alpha) = \phi(\beta) = \mu$. Since $\phi$ is injective $\alpha = \beta$ giving $\alpha, \beta \in \arcgraphstratum{2}{a}{S_1} \cap \arcgraphstratum{2}{b}{S_1}$. However, there is only one multiarc in $\arcgraphstratum{2}{a}{S_1} \cap \arcgraphstratum{2}{b}{S_1}$ which is $\{a, b\}$, therefore $\alpha = \beta = \{a,b\}$ contradicting that $\mu = \phi(\{a,b\})$. So $\nu_a$ and $\nu_b$ are not equal and $\nu := \nu_a \cap \nu_b$ is a $d$-multiarc.

Now we will show $f_a$ and $f_b$ induce the same map on $S_1 \backslash (a \cup b)$. Take $c \in \arcgraph{1}{S_1\backslash (a \cup b)}$ and consider the multiarcs $\{a,c\}$ and $\{b,c \}$. These form an edge in $\arcgraph{2}{S_1}$, therefore $\phi(\{a,c \})\phi(\{b,c\})$ is an edge in $\arcgraph{k_2}{S_2}$. We know 
\begin{align*}
\phi(\{a,c\}) &= \nu_a \cup f_a(c) \\
\phi(\{b,c\}) &= \nu_b \cup f_b(c)
\end{align*}
and since $\nu_a \neq \nu_b$, $f_a(c) = f_b(c)$. Accordingly, $f_a$ and $f_b$ induce the same map $\arcgraph{1}{S_1 \backslash(a \cup b)} \ra \arcgraph{1}{S_2 \backslash \phi(\{a,b\})}$. Since the complexities of $S_1 \backslash (a\cup b)$ and $S_2 \backslash \phi(\{a,b\})$ are equal we may assume $g_a = f_a|_{S_1 \backslash (a \cup b)}$ and $g_b = f_b|_{S_1 \backslash (a \cup b)}$ are homeomorphisms onto $S_2 \backslash \phi(\{ a,b\})$. 
Moreover, $g_b \inv \circ g_a$ induces the identity map on $S_1 \backslash (a \cup b)$, therefore the class of $g_b \inv \circ g_a$ is trivial in $\Mod(S_1 \backslash (a \cup b))$. This implies $g_a = g_b \circ h$ where $h$ is some homeomorphism on $S_1 \backslash (a \cup b)$ isotopic to the identity. Thus we can assume $f_a$ and $f_b$ agree on $S_1 \backslash (a \cup b)$ and we define a map $f: S_1 \ra S_2$ by extending either $f_a$ or $f_b$ to $S_1$. The resulting map $f$ is a $\pi_1$-injective embedding. Hence $f$ induces a simplicial map between the arc graphs of $S_1$ and $S_2$.

When $d=0$, i.e. $k_1 = k_2 = 2$, $f(a) = \nu_a$ and $f(b) = \nu_b$ so $\nu_a$ and $\nu_b$ are both arcs. By Corollary \ref{arcs}, the arcs $\nu_a$ and $\nu_b$ are of the same topological type as $a$ and $b$. Now $f_a$ induces a simplicial map between the arc graph of $S_1\backslash (a \cup b)$ and the arc graph of $S_2\backslash (\nu_a \cup \nu_b)$. As the surfaces have the same complexity, by the rigidity of arc graphs, we necessarily have that $S_1\backslash (a \cup b)$ and $S_2\backslash (\nu_a \cup \nu_b)$ are homeomorphic and in fact $f_a$ is induced by a homeomorphism. The same argurment holds for $f_b$. As $f_a$ and $f_b$ naturally extend to $f$, and because the arcs $a$, resp. $b$, and $f(a) = \nu_a$, resp. $f(b) = \nu_b$, are of the same type, we can promote $f$ to a homeomorphism which completes the case when $k_1 = k_2 = 2$.

\textbf{Case $k_1 \geq 3$:} Take $\mu \in \arcgraphstratum{k_1}{a \cup b}{S_1}$, we know $\mu = \{a\} \cup \{b\} \cup \tmu$ where $\tmu$ is a $(k_1-2)$-multiarc. Then
\begin{align*}
\phi(\mu) &= \nu_a \cup f_a(\mu \backslash a) = \nu_a \cup \{ f_a(b) \} \cup f_a(\tmu) \\
	&= \nu_b \cup f_b(\mu \backslash b) = \nu_b \cup \{ f_b(a) \} \cup f_b(\tmu)
\end{align*}
As $\tmu$ varies, $f_a(\tmu)$ and $f_b(\tmu)$ varies since $\phi$ is injective and the rest of $\phi(\mu)$ is fixed since it does not depend on $\tmu$. 
Therefore $\nu_a \cup \{f_a(b) \} = \nu_b \cup \{ f_b(a) \}$ and $f_a(\tmu) = f_b(\tmu)$, implying $f_a$ and $f_b$ induce the same map $\arcgraph{k_1-2}{S_1\backslash(a \cup b)} \ra \arcgraph{k_1-2}{S_2 \backslash (\nu_a \cup f_a(b))}$. 
Now using the same argument as before, $f_a|_{S_1 \backslash (a \cup b)} = f_b|_{S_1 \backslash (a \cup b)} \circ h$ where $h$ is a homeomorphism on $S_1\backslash (a \cup b)$ isotopic to the identity. 
And as before, we can assume $f_a$ and $f_b$ agree on $S_1\backslash (a \cup b)$ and defines a map $f: S_1 \ra S_2$. The same argument as in the case where $k=2$ holds here as well, implying $f$ is a $\pi_1$-injective map and when $d=0$ $f$ is a homeomorphism. Finally, since $f$ is injective $f(a) \neq f(b)$, thus $\nu_a \cap \nu_b$ is a $d$-multiarc.
\end{proof}

\begin{lem}
\label{On B}
The embedding $\phi$ is induced by $f$ and $\nu$ on $\B{k}{S_1}$.
\end{lem}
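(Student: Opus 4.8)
The plan is to show that the local data $(f_a, \nu_a)$ attached to each non-separating arc or ear $a$ glue into a single pair $(f, \nu)$ that induces $\phi$ on all of $\B{k}{S_1}$. The key is that $\B{1}{S_1}$ is connected (by the lemma above), so any two non-separating arcs/ears are joined by a chain of nice pairs, and Lemma \ref{on nice pairs} guarantees compatibility across each step of the chain.

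First I would fix a basepoint arc or ear $a_0$ on $S_1$, with associated $\pi_1$-injective embedding $f := f_{a_0}$ and $d$-multiarc $\nu := \nu_{a_0}$, so that $\phi$ is induced by $(f, \nu)$ on $\arcgraphstratum{k_1}{a_0}{S_1}$. Given any other non-separating arc or ear $b$, connectivity of $\B{1}{S_1}$ gives a path $a_0, a_1, \ldots, a_r = b$ in which consecutive arcs form a nice pair. By Lemma \ref{on nice pairs}, for each $i$ there is a single pair inducing $\phi$ on $\arcgraphstratum{k_1}{a_i}{S_1} \cup \arcgraphstratum{k_1}{a_{i+1}}{S_1}$; in particular the pair inducing $\phi$ on the stratum of $a_i$ is forced to agree (up to isotopy) with the one inducing it on the stratum of $a_{i+1}$. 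Propagating along the path, the pair inducing $\phi$ on $\arcgraphstratum{k_1}{b}{S_1}$ must be $(f, \nu)$ up to isotopy. Hence $\phi(\mu) = f(\mu) \cup \nu$ for every $\mu$ containing a non-separating arc or ear, i.e.\ for every vertex of $\B{k}{S_1}$.

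The main point requiring care is well-definedness of the glued map: a priori $f_a$ is only determined up to isotopy and $\nu_a$ up to the isotopy class of a multiarc, so I must check that the identifications coming from distinct nice-pair steps are mutually consistent (path-independence). This follows because on each overlap $\arcgraphstratum{k_1}{a_i}{S_1}$ the inducing data is unique: if $(f, \nu)$ and $(f', \nu')$ both induce $\phi$ there, then for every $\mu$ in that stratum $f(\mu)\cup\nu = f'(\mu)\cup\nu'$, and letting $\mu$ vary over the (infinitely many, since $\omega(S_1 \backslash a_i) = \omega(S_1) - 1 \geq 6$) arcs of $S_1 \backslash a_i$ forces $\nu = \nu'$ and $f = f'$ on the complementary subsurface, hence $f$ and $f'$ are isotopic. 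So any two paths from $a_0$ to $b$ yield the same pair up to isotopy, and the global $(f,\nu)$ is well defined.

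I expect the only real obstacle to be bookkeeping: verifying that "nice pair" is exactly the relation under which Lemma \ref{on nice pairs} applies and that $\B{1}{S_1}$-connectivity supplies chains of nice pairs between \emph{all} relevant arcs and ears (including the ear cases, where $S_1 \backslash a$ denotes the positive-complexity component). Once the chain exists, the gluing is formal.
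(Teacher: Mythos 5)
Your proposal is correct and follows essentially the same route as the paper: fix the pair $(f,\nu)$ coming from one nice pair, use the connectivity of $\B{1}{S_1}$ to reach any other arc or ear by a chain of nice pairs, and apply Lemma \ref{on nice pairs} at each step, with the overlap on each stratum $\arcgraphstratum{k_1}{a_i}{S_1}$ forcing the successive pairs to agree. Your explicit uniqueness check for path-independence is a slightly more careful version of the paper's ``both $f$ and $f'$ agree with $f_{a}$'' step, but it is not a different argument.
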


\begin{proof}
Let $f$ and $\nu$ be determined by the nice pair $(a,b)$ as in Lemma \ref{on nice pairs}. Take an arc $c \in \B{1}{S_1}$, since $\B{1}{S_1}$ is connected there exists a path 
\begin{equation*}
a = c_0, c_1, \ldots c_m = c.
\end{equation*}
As $a$ and $c_1$ form a nice pair, by Lemma \ref{on nice pairs} they determine a map $f'$ and a $d$-multiarc $\nu'$. Both $f$ and $f'$ agree with $f_a$ so we can assume that $f' = f$ on $S_1 \backslash a$. Then $f'(\mu) \cup \nu' = \phi(\mu) = f(\mu) \cup \nu$ for any multiarc $\mu$ on $S_1 \backslash a$, therefore $\nu' = \nu$. 

If $a$ is a non-separating arc then $f=f'$ on $S_1$ by continuity. Otherwise, if $a$ is an ear then $f =f'$ on everything but a triangle, but two maps can't differ on just a triangular portion, so they must be equal on all of $S$.
Now, repeating the argument along the path we know that $\phi$ is induced by $f$ and $\nu$ on $\arcgraphstratum{k_1}{c}{S_1}$, and therefore on $\B{k}{S_1}$ since $c$ is arbitrary. 
\end{proof}

\begin{lem}
The image of $\phi$ is in $\arcgraphstratum{k_2}{\nu}{S_2}$.
\end{lem}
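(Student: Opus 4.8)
The plan is to show that every multiarc $\mu \in \arcgraph{k_1}{S_1}$ has $\phi(\mu) \supseteq \nu$, i.e., $\phi(\mu) \in \arcgraphstratum{k_2}{\nu}{S_2}$. By Lemma \ref{On B} we already know this holds for every $\mu \in \B{k_1}{S_1}$, so the task is to handle the multiarcs in $\arcgraph{k_1}{S_1} \setminus \B{k_1}{S_1}$, namely those consisting entirely of separating arcs that are not ears. The idea is to approximate such a $\mu$ by neighboring multiarcs in $\B{k_1}{S_1}$ and use that $\phi$ is simplicial together with the convexity/star structure of $\arcgraphstratum{k_2}{\nu}{S_2}$ established in Theorem \ref{convexity main}.

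First I would fix $\mu \in \arcgraph{k_1}{S_1} \setminus \B{k_1}{S_1}$ and invoke Remark \ref{sep next to nonsep}: in every case (positive genus, punctured sphere, or $\geq 3$ boundary components and no genus) there exist \emph{several}, in fact at least two, distinct multiarcs $\mu_1, \mu_2 \in \B{k_1}{S_1}$ adjacent to $\mu$ in $\arcgraph{k_1}{S_1}$, obtained by replacing one arc of $\mu$ with a non-separating arc disjoint from $\mu \setminus \{a\}$ (via a Dehn twist or a direct count). Since $\mu_1, \mu_2 \in \B{k_1}{S_1}$, Lemma \ref{On B} gives $\phi(\mu_i) = f(\mu_i) \cup \nu$, so in particular $\nu \subseteq \phi(\mu_1) \cap \phi(\mu_2)$. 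Now $\phi(\mu)$ is adjacent to both $\phi(\mu_1)$ and $\phi(\mu_2)$ in $\arcgraph{k_2}{S_2}$, and these are two distinct multiarcs; one then needs to argue that $\phi(\mu)$ cannot omit any arc of $\nu$.

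The key step is this last deduction. I would argue as follows: $\phi(\mu_1)$ and $\phi(\mu_2)$ both lie in $\arcgraphstratum{k_2}{\nu}{S_2}$, which by Theorem \ref{convexity main} is strongly convex, so any geodesic between them stays in $\arcgraphstratum{k_2}{\nu}{S_2}$. If one can choose $\mu_1, \mu_2$ so that $\phi(\mu_1)$ and $\phi(\mu_2)$ are at distance $2$ with $\phi(\mu)$ the midpoint of a geodesic between them, then $\phi(\mu) \in \arcgraphstratum{k_2}{\nu}{S_2}$ and we are done. To get this, pick $\mu_1 = (\mu \setminus \{a\}) \cup \{a'\}$ and $\mu_2 = (\mu \setminus \{a\}) \cup \{a''\}$ where $a', a''$ are two \emph{disjoint} non-separating arcs in the relevant component of $S_1 \setminus (\mu \setminus \{a\})$ (Remark \ref{sep next to nonsep} furnishes infinitely many such arcs, hence two disjoint ones, in the positive-genus and many-boundary cases; in the punctured-sphere case one checks the count $p-1 \geq 2$ gives enough room). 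Then $\mu_1$ and $\mu_2$ share the $(k_1-1)$-multiarc $\mu \setminus \{a\}$, as does $\mu$, so all three lie in a common simplex-type configuration and $\mu$ is on a length-$2$ path $\mu_1, \mu, \mu_2$; moreover $d(\mu_1, \mu_2) = 2$ since $\mu_1 \neq \mu_2$ differ in one arc but $a', a''$ being disjoint and both non-separating need not intersect minimally, so they are not adjacent — alternatively, if they happen to be adjacent, replace one of them. Applying $\phi$ and using that $\phi$ is an embedding, $\phi(\mu_1), \phi(\mu), \phi(\mu_2)$ is a length-$2$ path with $\phi(\mu_1) \neq \phi(\mu_2)$; if $d(\phi(\mu_1), \phi(\mu_2)) = 2$ then strong convexity of $\arcgraphstratum{k_2}{\nu}{S_2}$ forces $\phi(\mu) \in \arcgraphstratum{k_2}{\nu}{S_2}$, giving $\nu \subseteq \phi(\mu)$; if instead $d(\phi(\mu_1),\phi(\mu_2)) = 1$, then $\phi(\mu_1), \phi(\mu_2), \phi(\mu)$ form a triangle, and since $\nu \subseteq \phi(\mu_1) \cap \phi(\mu_2)$, an application of Lemma \ref{permute} (the three vertices of the triangle $\phi(\mu_1), \phi(\mu_2), \phi(\mu)$ share a large common multiarc containing $\nu$) again yields $\nu \subseteq \phi(\mu)$.

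The main obstacle I expect is the bookkeeping in choosing $\mu_1$ and $\mu_2$ so that the configuration of $\phi(\mu_1), \phi(\mu_2), \phi(\mu)$ is forced to be either a geodesic of length $2$ or a triangle whose vertices jointly contain $\nu$ — and making sure this works uniformly across the three topological cases of Remark \ref{sep next to nonsep}, including low values of $p$. A secondary subtlety is ensuring that the replacement arc $a$ of $\mu$ can be taken to be the \emph{same} in both $\mu_1$ and $\mu_2$, so that $\mu, \mu_1, \mu_2$ genuinely share the $(k_1-1)$-submultiarc $\mu \setminus \{a\}$; this is exactly what Remark \ref{sep next to nonsep} provides (many non-separating arcs disjoint from a fixed $\mu \setminus \{a\}$), but one must double check that at least one arc of $\mu$ is separating-but-not-an-ear and that removing it leaves a component of positive enough complexity to host two disjoint non-separating arcs, which follows from $\omega(S_1) \geq 7 + k_1$.
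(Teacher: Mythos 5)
Your distance-two branch is sound: if $\phi(\mu_1),\phi(\mu_2)\in\arcgraphstratum{k_2}{\nu}{S_2}$ are at distance $2$, then $\phi(\mu_1),\phi(\mu),\phi(\mu_2)$ is a geodesic and Theorem \ref{convexity main} forces $\nu\subseteq\phi(\mu)$. The genuine gap is the adjacent branch, which you cannot avoid by choosing $\mu_1,\mu_2$ non-adjacent, since a simplicial embedding sends edges to edges but need not reflect non-edges. In that branch Lemma \ref{permute} does not do what you claim: its hypothesis is three vertices that already permute $k_2+1$ arcs, and its conclusion constrains the \emph{other} vertices of a complete subgraph containing them; it says nothing about one of the three vertices of the triangle itself. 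Concretely, with $\phi(\mu_1)=f(\mu\setminus\{a\})\cup\{f(a')\}\cup\nu$ and $\phi(\mu_2)=f(\mu\setminus\{a\})\cup\{f(a'')\}\cup\nu$ sharing the $(k_2-1)$-multiarc $f(\mu\setminus\{a\})\cup\nu\supseteq\nu$, a vertex adjacent to both may perfectly well be
\[
\bigl(\phi(\mu_1)\cup\phi(\mu_2)\bigr)\setminus\{a^*\}
\]
for some $a^*\in\nu$: it contains both $f(a')$ and $f(a'')$ and omits $a^*$, yet meets each $\phi(\mu_i)$ in $k_2-1$ arcs. So a triangle with $\nu\subseteq\phi(\mu_1)\cap\phi(\mu_2)$ does not force $\nu\subseteq\phi(\mu)$; two neighbours from $\B{k_1}{S_1}$ are simply not enough data.

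Closing the gap requires more neighbours, and at that point you are reconstructing the paper's argument, which is a direct count: if some $a\in\nu$ is missing from $\phi(\mu)$, adjacency forces $f(\omu)\subseteq\eta:=\phi(\mu)\setminus\nu$ for \emph{every} $\omu\in\B{k_1}{S_1}$ adjacent to $\mu$, where $|\eta|=k_1+1$ is independent of $\omu$; hence there are at most $k_1+1$ such neighbours, contradicting the abundance guaranteed by Remark \ref{sep next to nonsep}. (In your setup, a third neighbour $\mu_3=(\mu\setminus\{a\})\cup\{a'''\}$ already finishes the triangle case: $\phi(\mu)$ would have to contain both $f(a')$ and $f(a'')$, hence would differ from $\phi(\mu_3)$ in at least two arcs, contradicting adjacency.) The idea is salvageable, but as written the adjacent case is unproved and the appeal to Lemma \ref{permute} is a misapplication.
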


\begin{proof}
Suppose not. Take $\mu \in \arcgraph{k_1}{S_1} \backslash \B{k_1}{S_1}$ such that $\phi(\mu) \notin \arcgraphstratum{k_2}{\nu}{S_2}$. In other words, there exists an arc $a \in \nu$ such that $a \notin \phi(\mu)$. Take a multiarc $\omu \in \B{k_1}{S_1}$ adjacent to $\mu$, which exists by Remark \ref{sep next to nonsep}. We know that $\phi(\omu) = \nu \cup f(\omu)$ and since $\mu$ and $\omu$ are adjacent and $\phi$ is an embedding, $\phi(\mu)$ and $\phi(\omu)$ are adjacent. 
Therefore, $|\phi(\mu) \cap \phi(\omu)| = k_2 -1$ and $\phi(\mu) \cap \phi(\omu) = \nu \backslash \{a\} \cup f(\omu)$. This tells us that $f(\omu) \subseteq \phi(\mu)\backslash \nu =: \eta$ and $\nu \backslash \{a\} \subset \phi(\mu)$ where, $|\eta| = k_2 - (d-1) = k_1+1$. 
Then $\omu \subseteq f\inv (\eta)$, which has at most cardinality $k_1 + 1$ since $f$ is injective. This holds true for every $\omu \in \B{k_1}{S_1}$ adjacent to $\mu$, and $\eta$ does not depend on $\omu$. We know there are at most $k_1+1$ multiarcs in $f\inv(\eta)$ but there are more than $k_1+1$ arcs in $\B{k_1}{S_1}$ adjacent to $\mu$ by Remark \ref{sep next to nonsep} arriving at a contradiction.
\end{proof}

This gives us a map 
\begin{align*}
\Phi: \arcgraph{k_1}{S_1} &\lra \arcgraph{k_1}{S_2} \\
	\mu &\longmapsto \phi(\mu) \backslash \nu
\end{align*}
So we have
\begin{equation*}
\phi(\mu) = \Phi(\mu) \cup \nu
\end{equation*}
for every $\mu \in \arcgraph{k_1}{S_1}$ and all that's left to show is that $f$ induces $\Phi$.

\begin{lem}
For any arc $b$ on $S_1$, $\Phi(\arcgraphstratum{k_1}{b}{S_1}) \subseteq \arcgraphstratum{k_1}{f(b)}{S_2}$.
\end{lem}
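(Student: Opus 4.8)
The plan is to split on the topological type of $b$. If $b$ is non-separating or an ear, then every $k_1$-multiarc containing $b$ is a vertex of $\B{k_1}{S_1}$, so Lemma \ref{On B} applies verbatim: for every $\mu \in \arcgraphstratum{k_1}{b}{S_1}$ we have $\phi(\mu) = f(\mu) \cup \nu$, hence $\Phi(\mu) = f(\mu) \ni f(b)$, which is exactly $\Phi(\mu) \in \arcgraphstratum{k_1}{f(b)}{S_2}$. So the content is entirely in the case where $b$ is separating and not an ear.

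In that case fix $\mu \in \arcgraphstratum{k_1}{b}{S_1}$. If $\mu$ contains a non-separating arc or an ear then $\mu \in V(\B{k_1}{S_1})$ and we conclude as above, so assume every arc of $\mu$ is a separating non-ear, and suppose for contradiction that $f(b) \notin \Phi(\mu)$. The idea is to compare $\mu$ with many neighbours that already lie in $\B{k_1}{S_1}$. Pick an arc $c \in \mu \backslash \{b\}$ (possible since $k_1 \geq 2$ in the inductive step). Using that $S_1$ is non-exceptional with $\omega(S_1) \geq 7 + k_1$, Remark \ref{sep next to nonsep} provides at least three pairwise distinct arcs $a_1, a_2, a_3$, each non-separating in $S_1$ or an ear of $S_1$ and disjoint from $\mu$; then $\bar\mu^{\,j} := (\mu \backslash \{c\}) \cup \{a_j\}$ is adjacent to $\mu$, contains $b$, and lies in $V(\B{k_1}{S_1})$. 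Since $\phi$ is a simplicial embedding, $\phi(\mu)$ and $\phi(\bar\mu^{\,j})$ are adjacent; writing $\phi(\cdot) = \Phi(\cdot) \cup \nu$ with $\nu$ common to and disjoint from both $\Phi(\mu)$ and $\Phi(\bar\mu^{\,j}) = f(\bar\mu^{\,j})$ (the latter by Lemma \ref{On B}), this forces $|\Phi(\mu) \cap f(\bar\mu^{\,j})| = k_1 - 1$. As $f(b) \in f(\bar\mu^{\,j})$ but $f(b) \notin \Phi(\mu)$, the unique arc of $f(\bar\mu^{\,j})$ missing from $\Phi(\mu)$ is $f(b)$, so $\Phi(\mu) \supseteq f(\bar\mu^{\,j}) \backslash \{f(b)\} = f(\mu \backslash \{b,c\}) \cup \{f(a_j)\}$.

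Now the $(k_1-2)$-multiarc $f(\mu \backslash \{b,c\})$ lies in $\Phi(\mu)$ independently of $j$, and $\Phi(\mu)$ has exactly $k_1$ arcs (it equals $\phi(\mu)\backslash\nu$ with $\nu\subseteq\phi(\mu)$ by the previous lemma), so $\Phi(\mu) \backslash f(\mu \backslash \{b,c\})$ is a fixed two-element set that contains $f(a_j)$ for each $j$. Since $f$ is injective and $a_1,a_2,a_3$ are distinct, $\{f(a_1),f(a_2),f(a_3)\}$ has three elements, contradicting its being contained in a set of size two. Hence $f(b) \in \Phi(\mu)$, proving $\Phi(\arcgraphstratum{k_1}{b}{S_1}) \subseteq \arcgraphstratum{k_1}{f(b)}{S_2}$.

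I expect the only delicate step to be the appeal to Remark \ref{sep next to nonsep}: one must verify, running through the genus, punctured-sphere, and planar cases it records, that once the flipped arc $c$ is fixed there remain at least three admissible replacements $a_j$ that are disjoint from $\mu$ and non-separating or ears in $S_1$ — this is precisely where the hypotheses $\omega(S_1)\geq 7+k_1$ and non-exceptionality do their work (in the genus and $\geq 3$-boundary cases one in fact gets infinitely many via Dehn twists, and in the $S_{0,p}$ case the bound on $p$ coming from the complexity hypothesis gives more than enough). Everything after that is the bookkeeping carried out above.
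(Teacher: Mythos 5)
Your proof is correct and takes essentially the same route as the paper's: assume $f(b)\notin\Phi(\mu)$, compare $\Phi(\mu)$ with the images of neighbours of $\mu$ lying in $\B{k_1}{S_1}$ (which are pinned down by Lemma \ref{On B} and the fact that $\nu\subseteq\phi(\mu)$), and use Remark \ref{sep next to nonsep} to produce enough such neighbours for a counting contradiction. If anything your write-up is tighter than the paper's --- by fixing the replaced arc $c\neq b$ so that every comparison multiarc still contains $b$, you need only three replacement arcs $a_j$ instead of more than $k_1+1$ neighbours, and you make explicit the contradiction hypothesis that the paper leaves implicit.
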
 

\begin{proof}
Take $\mu \in \arcgraphstratum{k_1}{b}{S_1}$. If $\mu \in \arcgraphstratum{k_1}{b}{S_1} \cap \B{k_1}{S_1}$, then $\Phi(\mu) = f(\mu) \in \arcgraphstratum{k_2}{f(b)}{S_2}$ by Lemma \ref{On B}. 
Suppose $\mu \in \arcgraphstratum{k_1}{b}{S_1} \backslash \B{k_1}{S_1}$. For any $\omu \in \B{k_1}{S_1}$ that is adjacent to $\mu$ we have
\begin{equation*}
\Phi(\omu) \cap \Phi(\mu) = f(\omu) \backslash \{f(b)\}. 
\end{equation*}
Therefore $\omu$ is a subset of $\Phi(\mu) \cup \{f(b)\}$. This implies that there are $k_1 + 1$ multiarcs in $\B{k_1}{S_1}$ adjacent to $\mu$, contradicting Remark \ref{sep next to nonsep}.
\end{proof}

Take any $\mu \in \arcgraph{k_1}{S_1}$ and say $\mu = \{b_1, \ldots, b_{k_1} \}$. Then 
\begin{equation*}
\Phi(\mu) = \cap_{j=1}^{k_1} \Phi(\arcgraphstratum{k_1}{b_j}{S_1}) \subseteq \cap_{j=1}^{k_1} \arcgraphstratum{k_1}{f(b_j)}{S_2} = f(\mu).
\end{equation*}
Therefore $\Phi(\mu) = f(\mu)$ for all multiarcs $\mu \in \arcgraph{k_1}{S_1}$, which concludes the proof of Theorem \ref{embedding main}.

{\it Addresses:}\\
Department of Mathematics, University of Luxembourg, Esch-sur-Alzette, Luxembourg\\
Department of Mathematics, Brown University, Providence, RI, USA

{\it Emails:}\\
hugo.parlier@uni.lu\\
aweber@math.brown.edu

\end{document}